\newtheorem{thm}{Theorem}[section]
\newtheorem{lem}[thm]{Lemma}
\newtheorem{prop}[thm]{Proposition}
\newtheorem{remark}[thm]{Remark}
\newcommand{\R}{\mathbb{R}}
\begin{document}
\title[The effect of an unfavorable region]{The effect of an unfavorable region on the invasion process of a species$^\S$}
\thanks{$\S$ This research was partly supported by the NSFC (No. 12101413, 12071299), and the NSF of Shanghai (20JC1413800).}
\author[P. Lai and J. Lu]{Pengchao Lai$^\dag$ and Junfan Lu$^{\dag, *}$}
\thanks{$\dag$ Mathematics and Science College, Shanghai Normal University, Shanghai 200234, China.}
\thanks{$*$ Corresponding author.}
\thanks{{\bf Emails:} {\sf laipchao@163.com} (P. Lai), {\sf jlu@shnu.edu.cn} (J. Lu)}
\date{}

\baselineskip 15pt

\begin{abstract}
To model a propagating phenomena through the environment with an unfavorable region, we consider a reaction diffusion equation with negative growth rate in the unfavorable region and bistable reaction outside of it. We study rigorously the influence of $L$, the width of the unfavorable region, on the propagation of solutions.
It turns out that there exists a critical value $L^*$ depending only on the reaction term such that, when $L<L^*$, spreading happens for any solution in the sense that it passes through the unfavorable region successfully and establish with minor defect in the region;
when $L=L^*$, spreading happens only for a species with large initial population, while residue happens for a population with small initial data, in the sense that the solution converges to a small steady state;
when $L>L^*$ we have a trichotomy result: spreading/residue happens for a species with large/small initial population, but, for a species with medium-sized initial data, it can not pass through the region either and converges to a transition steady state.
\end{abstract}

\subjclass[2010]{Primary 35B40, 92D25; Secondary 35K57}
\keywords{Population dynamics; reaction diffusion equation; unfavorable region; asymptotic behavior.}
\maketitle

\section{Introduction}
In the field of mathematical biology, reaction diffusion equations like
\begin{equation}\label{RDE}
u_t = \Delta u +f(u)
\end{equation}
are often used to describe the dynamics and the spreading phenomena of a species, where $f(u)/u$ represents the growth rate of the species. Typical examples of $f$ include the so-called monostable case like $f(u)=u(1-u)$ and bistable case like $f(u)=u(u-\frac13) (1-u)$. These equations have been studied systematically in the last decades. For example, in the monostable case, Aronson and Weinberger \cite{AW1,AW2} proved a {\it hair-trigger effect}, which says that {\it spreading} (also called {\it persistence} or {\it propagation}: $u$ converges to a positive stationary solution) happens for any positive solution. In the bistable case, \cite{AW1,AW2} gave sufficient conditions for {\it spreading} and that for {\it vanishing} (also called {\it extinction}: $u\to 0$ as $t\to \infty$). In 1977, Fife and McLeod \cite{FM} proved the existence and stability for the traveling wave solution of the bistable equation. In 2006, Zlato\v{s} \cite{Zla} gave a complete study on the bistable equation, and proved a trichotomy result on the asymptotic behavior for the solutions: there exists a sharp value $L^*>0$ such that when $L>L^*$ (resp. $L<L^*$), spreading (resp. vanishing) happens for the solution of the bistable equation with initial data $u(x,0)=\chi_{[-L,L]}(x)$; when $L=L_*$, the solution develops to a transition  ground state solution.
In 2010, Du and Matano \cite{DM} extended these results to the Cauchy problem with general initial data like $u(x,0) =\phi_\lambda (x)$. For bistable equations, they also proved the trichotomy and sharp transition results as in \cite{Zla}.

In practical problems, however, a spreading process is generally not smooth due to the spatial and/or temporal heterogeneity. A favorable environment can accumulate the spreading while an unfavorable one will slow down or even block the spreading. For example, in the invasion process of a harmful species or infectious diseases, people often establish isolation zones to prevent the invasion by intensive elimination, or in the spreading process of a beneficial species, it may encounter adverse environments such as deserts, swamps, environmental pollution, etc..
All of these unfavorable (for the species) environments can be regarded as desert regions for the propagation.  (Note that, an unfavorable region we consider here is different from an obstacle area if the latter one is regarded as a region without population at all).
In both ecology and mathematics, whether the species can pass through the region successfully or not is a problem that people are interested in (e.g., \cite{LWZ,LZC,ML,ZKL} etc.).

From mathematical point of view, if the spreading process of a species obeys the reaction diffusion equation as \eqref{RDE} before it encounters an unfavorable region, then the reaction term should be negative to the growth of the population when the species enters the unfavorable region, such as
$f= -k u + \varepsilon (u)$ for some small $\varepsilon(u)$ (e.g., \cite{Wang1} etc.), or just $f=-ku$ for simplicity. Intuitively, a species can passes though the unfavorable region easily when $L$ (the width of the unfavorable region) and $k$ (the death rate of the species in the unfavorable region) are small, but not when $L, k$ are large. In this paper, we normalize the parameter $k=1$, and study rigorously the influence of $L$ on the propagation phenomena in one space dimension. We will show that there is a critical number $L^*$ such that, when $L<L^*$, {\it spreading happens} for a species in the sense that it passes through the unfavorable region successfully and establish with minor defect in the region; when $L=L^*$, we have a dichotomy result: spreading happens only for a species with large initial population, while {\it residue happens} for a species with small initial data, in the sense that the solution converges to a small steady state, which takes small value in the front area; when $L>L^*$, we have a trichotomy result: spreading or residue happens for a species with large or small initial population. For a species with medium-sized initial data, it can not pass through the region either, but converges to a transition steady state which lies between the final states of spreading and residue.

Our mathematical model is the following Cauchy problem:
\begin{equation}\label{main-eq}
 \left\{\begin{array}{ll}
  u_t = u_{xx} + f(x,u), ~& x\in \mathbb{R},\ t>0,\\
  u(x,0)= u_0(x),  ~&x\in \mathbb{R},
        \end{array}
        \tag{P}
 \right.
\end{equation}
where the reaction term $f(x,u)$ is defined as the following:
\begin{equation}
\label{condition1}
\tag{H}
f(x,u)=
\begin{cases}
u (u-\alpha )(1-u), & |x| \geq L,\\
- u , & |x| <  L.
\end{cases}
\end{equation}
Here, $\alpha\in (0,\frac12)$, and as we mentioned above, $2L$ is used to denote the width of the unfavorable region $[-L,L]$, in which the environment is unfavorable for the species so that the growth rate is $-1$. We will see below that, as can be expected, the propagation will be more difficult than the original bistable equation, due to the destructiveness in the unfavorable region. This kind of model was also used by some authors to understand the phenomena of crime (e.g., \cite{BN, BRR} and references therein). The unknown $u(x,t)$ then denotes the population's propensity to commit a crime, and the unfavorable region $[-L,L]$ then denotes a prevention area of the invasions of crimes.
Our background is different from theirs, but the mechanism is similar. As can be seen below, our results focus more on the convergence of solutions than theirs (see more in Remark \ref{rem:to-Bere-1}).

We take initial data from $L^\infty(\R)$, then the well-posedness of $W^{2,1}_{p, loc}$ strong solution follows from the standard parabolic theory. We are then interested in the qualitative property of the solutions. We will show that any global solution of \eqref{main-eq} will converge as $t\to \infty$ to a stationary one, as in the typical reaction diffusion equations (e.g., \cite{DL, DM, Zla}).
Since we are studying a species with sufficient large population in the original habitat, and it tries to propagate further through an unfavorable region, it is natural to consider the initial data as the following:
\begin{equation}\label{ini-cond-2}
\tag{I}
\left\{
 \begin{array}{l}
 u_0(x)\in L^\infty(\R), \mbox{ it is decreasing in }\R \mbox{ with } u_0(-\infty)=1,\ u_0(\infty)=0,\\
 \mbox{and } \displaystyle \lim\limits_{x\to -\infty} \frac{|1-u_0(x)|}{e^{\sqrt{1-\alpha}\; x}} =0 \mbox{ or } \infty,\quad  \lim\limits_{x\to +\infty} \frac{u_0(x)}{ e^{-\sqrt{\alpha}\; x}} =0 \mbox{ or }\infty.
 \end{array}
 \right.
\end{equation}
The decay rates as $x\to \pm \infty$ are imposed only for technical reasons when we use the zero number argument to prove a general convergence result. They are not essential and can be weakened (see more in Section 3 Remark \ref{rem:weaken-ini-cond}).
Clearly, the typical examples: $H(a-x),\ U(x-ct-a)$ satisfy these conditions,  where $a\in \R$, $H$ is the Heaviside function and $U(x-ct)$ is the traveling wave solution of the bistable equation with $U'(z)<0$.

Since the classification of the positive stationary solutions depends on the size of $L$, before we state our main theorems, it is necessary to specify the stationary solutions of (P) clearly. Stationary solutions solve the following equation:
\begin{equation}\label{equ1.1}
v''+f(x,v)=0,\quad x \in \mathbb{R}.
\end{equation}
By a careful phase plane analysis (see details in Section 2), we will see that there exists a critical value $L^* >0$ (which depends only on $f$), such that when $L\in (0,L^*)$, \eqref{equ1.1} has exactly one positive stationary solution, denoted by $\mathcal{V}_b(x)\in W^2_{p, loc}(\R)$ for any $p>1$, which satisfies
\begin{equation}\label{V-b-property}
\mathcal{V}_b(-x)=\mathcal{V}_b(x),\ \  \mathcal{V}'_b(x)>0 \mbox{ for } x >0,\quad \mathcal{V}_b(0)>0,\quad \mathcal{V}_b(\pm \infty)=1.
\end{equation}
For convenience, we call $\mathcal{V}_b$ as a {\bf big stationary solution}, and say {\bf spreading happens} for a solution $u$ of the problem \eqref{main-eq} if
$$
\lim\limits_{t\to \infty} u(x,t)= \mathcal{V}_b(x),\mbox{\ \ in the topology of }L^\infty_{loc}(\R).
$$
Then in case $0<L<L^*$ we have the following convergence result.

\begin{thm}[Spreading result for small $L$]\label{thm1.1}
Assume $L \in(0,L^*)$. Then for any $u_0$ satisfying   \eqref{ini-cond-2}, spreading happens for the solution $u$ of \eqref{main-eq}.
\end{thm}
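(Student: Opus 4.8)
\medskip
\noindent\textbf{Proof proposal.}
Since $u_0$ from \eqref{ini-cond-2} has range in $[0,1]$, $0$ solves \eqref{main-eq}, and the constant $1$ is a supersolution of \eqref{main-eq} (as $f(x,1)\le 0$, strictly for $|x|<L$), the comparison principle gives that $u$ is global with $0\le u\le 1$. By the phase-plane analysis of Section~2, for $L\in(0,L^*)$ the only bounded stationary solutions of \eqref{equ1.1} with range in $[0,1]$ are $0$ and $\mathcal{V}_b$ (anything in between that is not identically $0$ is positive, by the strong maximum principle). The plan is to squeeze $u$ between two solutions of \eqref{main-eq} that are monotone in $t$ and converge, as $t\to\infty$, to $\mathcal{V}_b$; constructing the lower one is the heart of the matter.

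\noindent\emph{The lower barrier.} Write $G(v)=\int_0^v s(s-\alpha)(1-s)\,ds$, let $v_*\in(\alpha,1)$ be its unique zero in $(0,1)$, and recall $G(1)>0$ since $\alpha<\tfrac12$. I would construct a compactly supported stationary subsolution $\underline{v}$ of \eqref{equ1.1} with $\underline{v}\ge 0$, $\underline{v}\not\equiv 0$ and $\max\underline{v}<1$, as follows. On $[-L,L]$ set $\underline{v}(x)=\sigma\cosh x$ (the borderline case of $\underline{v}''-\underline{v}\ge 0$); for $|x|\ge L$ continue $\underline{v}$ along the bistable Hamiltonian orbit $\tfrac12(\underline{v}')^2+G(\underline{v})=E$ with energy $E>0$ small, which, starting from $\underline{v}'(L)=\sigma\sinh L>0$, rises to the value $v_1<1$ fixed by $G(v_1)=E$, turns around, and comes back down to $0$ over a finite length; beyond that put $\underline{v}\equiv 0$, and extend symmetrically to $x<-L$. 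The $C^1$ gluing at $x=\pm L$ is the single relation $\tfrac12\sigma^2\sinh^2L+G(\sigma\cosh L)=E$, solvable with $0<\sigma<v_*/\cosh L$ as soon as $E<\min\{\tfrac12 v_*^2\tanh^2L,\,G(1)\}$ (its left side vanishes at $\sigma=0$ and equals $\tfrac12 v_*^2\tanh^2L$ at $\sigma=v_*/\cosh L$). The only non-$C^1$ points of $\underline{v}$ are the two ends of its support, where $\underline{v}''$ carries a nonnegative Dirac mass, so $\underline{v}$ is a legitimate generalized subsolution of \eqref{equ1.1}. Moreover $\underline{v}$ is a subsolution of the homogeneous bistable equation on all of $\mathbb{R}$: on $[-L,L]$ one has $\underline{v}''+\underline{v}(\underline{v}-\alpha)(1-\underline{v})=\underline{v}\,[1+(\underline{v}-\alpha)(1-\underline{v})]\ge(1-\alpha)\underline{v}\ge 0$.

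\noindent\emph{Using the barriers.} Every left translate $\underline{v}(\cdot-\tau)$, $\tau<0$, is again a bistable subsolution, and once $\tau$ is negative enough that $\operatorname{supp}\underline{v}(\cdot-\tau)\subset(-\infty,-L)$ --- where \eqref{main-eq} agrees with the bistable equation --- it is a subsolution of \eqref{main-eq}. Since $u_0(-\infty)=1>\max\underline{v}$, one may choose $\tau$ so that in addition $u_0\ge\underline{v}(\cdot-\tau)$ on $\mathbb{R}$. The solution $\underline{U}$ of \eqref{main-eq} from the datum $\underline{v}(\cdot-\tau)$ is then nondecreasing in $t$ and bounded by $1$, hence converges in $L^\infty_{loc}$ to a bounded stationary solution $\underline{V}$ with $0\not\equiv\underline{v}(\cdot-\tau)\le\underline{V}\le 1$; by the classification $\underline{V}=\mathcal{V}_b$. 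Dually, the solution $\overline{U}$ of \eqref{main-eq} from $\overline{U}(\cdot,0)\equiv 1$ is nonincreasing in $t$ and dominates $\underline{U}$, hence converges to a bounded stationary solution $\overline{V}$ with $\mathcal{V}_b\le\overline{V}\le 1$, so $\overline{V}=\mathcal{V}_b$ too. As $\underline{v}(\cdot-\tau)\le u_0\le 1$, comparison gives $\underline{U}\le u\le\overline{U}$, and letting $t\to\infty$ (the monotone limits being uniform on compacts by Dini) yields $u(\cdot,t)\to\mathcal{V}_b$ in $L^\infty_{loc}(\mathbb{R})$, i.e.\ spreading. (If the general convergence-to-equilibrium result announced in Section~3 is invoked, the upper barrier is superfluous: the lower one already forces the limit to be $\not\equiv 0$, hence equal to $\mathcal{V}_b$.)

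\noindent The main obstacle is the lower barrier. One has to check that the central profile $\sigma\cosh x$ can be glued $C^1$ to an outer bistable orbit that turns below the level $1$ and returns to $0$ over a \emph{finite} length, and that the corners at the two ends of $\operatorname{supp}\underline{v}$ put a nonnegative mass in $\underline{v}''$. It is essential that $\underline{v}$ have compact support rather than merely exponential decay: $u_0$ in \eqref{ini-cond-2} may decay strictly faster than $e^{-\sqrt{\alpha}\,x}$, so a slowly decaying subsolution could not be placed below it by a translation.
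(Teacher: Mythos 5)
Your strategy---squeezing $u$ between the monotone-in-$t$ solutions issued from a compactly supported stationary subsolution placed far to the left and from the supersolution $\equiv 1$---is a genuinely different route from the paper's, which simply invokes the general convergence result of Theorem \ref{thm:conv} (itself proved by a zero-number argument). But there is a genuine gap at the step you lean on hardest: the claim that for $L\in(0,L^*)$ the only bounded stationary solutions of \eqref{equ1.1} with range in $[0,1]$ are $0$ and $\mathcal{V}_b$ is false. The phase plane of Section~2 produces many other bounded nonnegative stationary solutions: for instance, when $\tanh^2L<\alpha$ the profile $a\cosh x$ on $[-L,L]$ with $a$ small has endpoints $(a\cosh L,\pm a\sinh L)$ lying strictly inside the homoclinic loop $\Gamma_2$, and continuing it on $|x|\ge L$ along the corresponding periodic orbits yields a positive stationary solution with range in $(0,\theta)$; ground-state-type solutions (both tails on $\Gamma_2$) exist whenever $r(a)=L$ is solvable. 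The paper records exactly this in Remark \ref{rem:general-bi-ss1} and in the list $v_1,\dots,v_{10}$ inside the proof of Theorem \ref{thm:conv}; these extra solutions are the reason the zero-number machinery is needed there. Since $L^*\to0$ as $\alpha\to\tfrac12$ while $\mathrm{arctanh}\sqrt{\alpha}$ stays bounded away from $0$, for $\alpha$ near $\tfrac12$ such extra solutions exist for \emph{every} $L\in(0,L^*)$, so the gap is not confined to exceptional parameters.

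The argument is repairable, but not by a one-line citation. All of the extra solutions above take values in $(0,\theta]$ (their tails lie in the closed homoclinic region and their $\cosh$ cores are dominated by the endpoint values), whereas your limits satisfy $\underline{V}\ge\underline{v}(\cdot-\tau)$ with $\max\underline{v}=v_1>\theta$, and $\overline{V}\ge\mathcal{V}_b$ with $\mathcal{V}_b(\pm\infty)=1$. So the statement you actually need, and must prove, is: every bounded stationary solution of \eqref{equ1.1} with values in $[0,1]$ that exceeds $\theta$ somewhere equals $\mathcal{V}_b$ when $L<L^*$. This does follow from the phase plane (a complete bounded nonnegative left tail exceeding $\theta$ must lie on $\Gamma_1'$; then the span inequalities $\ell(a)\ge 2L^*>2L$ exclude every right tail except $\Gamma_1$, forcing $2R(a)=2L$ and hence $\mathcal{V}_b$), but that is essentially the content of Proposition \ref{prop:ss} together with Step~2 of Theorem \ref{thm:conv}, and it has to be written out. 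Two smaller remarks: the $\cosh$ core of your subsolution is superfluous, since you translate its support entirely into $(-\infty,-L)$ where the compactly supported solution $V_a$ of \eqref{ss-bistable} already serves (this is what the paper uses in Lemma \ref{lem3.5}); and your parenthetical alternative (invoke the Section~3 convergence theorem and use only the lower barrier) is in fact the paper's proof, at which point even the lower barrier is unnecessary because $\mathcal{S}$ is a singleton for $L<L^*$.
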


In case $L=L^*$, the equation \eqref{equ1.1} has two positive stationary solutions: $\mathcal{V}_b$ as above and $\mathcal{V}_s \in W^2_{p,loc}(\R)$ for any $p>1$, the latter satisfies
\begin{equation}\label{V-s-property}
\mathcal{V}_s(-\infty)=1,\quad \mathcal{V}_s(+\infty)=0,\quad \mathcal{V}'_s (x)<0 \; \mbox{for} \; x \in \mathbb{R}.
\end{equation}
For convenience, we call $\mathcal{V}_s$ as a {\bf small stationary solution}, and say
that {\bf residue happens} for the solution $u$ if it converges to $\mathcal{V}_s(x)$ as $t\to \infty$ in the topology of $L^\infty_{loc}(\R)$.

\begin{thm}[Dichotomy result for critical $L^*$]\label{thm1.2}
 Assume $L=L^*$ and $\phi$ satisfies   \eqref{ini-cond-2}. For any $\sigma\in \R$, let $u_\sigma(x,t)$ be the time-global solution of \eqref{main-eq} with initial data $u_0 = \phi (x-\sigma)$. Then either spreading happens as above for all $\sigma \in \R$, or there exists $\sigma^* \in \R$ such that
\begin{itemize}
\item[(i)] spreading happens when $\sigma \in (\sigma^*,+\infty)$;
\item[(ii)] residue happens when $\sigma \in (-\infty,\sigma^*]$.
\end{itemize}
\end{thm}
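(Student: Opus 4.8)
The plan is a threshold argument in the shift parameter $\sigma$, in the spirit of the bistable theory of \cite{DM,Zla}, adapted to the $x$-dependent reaction in \eqref{main-eq}. Throughout one uses that $0\le \phi(\cdot-\sigma)\le1$ and $f(x,1)\le0$, so each $u_\sigma$ is a global solution with values in $[0,1]$, and that $f(x,u)$ is even in $x$, so reflection symmetry of initial data is preserved. \emph{Step 1: every $u_\sigma$ either spreads or residues.} By the general convergence result, $u_\sigma(\cdot,t)\to v_\sigma$ in $L^\infty_{loc}(\R)$ as $t\to\infty$ for some stationary solution $v_\sigma$ of \eqref{equ1.1}. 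Since $\phi(-\infty)=1>\alpha$, comparing on the half-line $\{x<L\}$ with a standard compactly supported bistable subsolution of plateau height $\eta\in(\alpha,1)$ placed far to the left (which is also a subsolution of \eqref{main-eq}, as $f$ coincides with the bistable reaction on $\{|x|\ge L\}$ and $f(x,0)=0$) gives $\liminf_{t\to\infty}u_\sigma(x,t)\ge\eta$ for $x$ far to the left; letting $\eta\uparrow1$ yields $v_\sigma(-\infty)=1$. By the phase-plane classification of Section 2, the only positive stationary solutions of \eqref{equ1.1} with $v(-\infty)=1$ are $\mathcal V_b$ and $\mathcal V_s$ (the reflection $\mathcal V_s(-\cdot)$ has $v(-\infty)=0$), so $v_\sigma\in\{\mathcal V_b,\mathcal V_s\}$. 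Set $\Sigma_b=\{\sigma:\ v_\sigma=\mathcal V_b\}$ (spreading) and $\Sigma_s=\R\setminus\Sigma_b$ (residue).

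\emph{Step 2: monotonicity and $\Sigma_b\ne\emptyset$.} As $\phi$ is decreasing, $\sigma_1<\sigma_2$ gives $\phi(x-\sigma_1)\le\phi(x-\sigma_2)$, hence $u_{\sigma_1}\le u_{\sigma_2}$ for $t>0$ by comparison; if $\sigma_1\in\Sigma_b$ then $\liminf_{t\to\infty}u_{\sigma_2}\ge\mathcal V_b$, and since $\mathcal V_s(+\infty)=0<1=\mathcal V_b(+\infty)$ we cannot have $v_{\sigma_2}=\mathcal V_s$, so $\sigma_2\in\Sigma_b$. Thus $\sigma\in\Sigma_b,\ \sigma'>\sigma\Rightarrow\sigma'\in\Sigma_b$, i.e. $\Sigma_b$ is a half-line and $\Sigma_s$ its complement. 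To see $\Sigma_b\ne\emptyset$ I would use a symmetric compactly supported stationary subsolution: by the phase-plane analysis of Section 2, for $\rho$ large there is a symmetric $V_\rho$ with $V_\rho''+f(x,V_\rho)=0$ on $(-\rho,\rho)$, $0<V_\rho<1$ there, $V_\rho(\pm\rho)=0$, and $V_\rho\le\mathcal V_b$ on $[-\rho,\rho]$; extended by $0$ it is a (weak) subsolution of \eqref{main-eq}, so the solution $w_\rho$ of \eqref{main-eq} starting from $V_\rho$ is nondecreasing in $t$ and converges to a \emph{symmetric} positive stationary solution $\ge V_\rho$ — which can only be $\mathcal V_b$ (the solutions $\mathcal V_s,\mathcal V_s(-\cdot)$ are strictly monotone). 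Hence $w_\rho(\cdot,t)\nearrow\mathcal V_b$. Since $\phi(-\infty)=1$, for $\sigma$ large $\phi(\cdot-\sigma)\ge\max_{[-\rho,\rho]}V_\rho$ on $[-\rho,\rho]$ and $\ge0=V_\rho$ off it, so $u_\sigma\ge w_\rho\to\mathcal V_b$ and $\sigma\in\Sigma_b$; thus $[\sigma_0,\infty)\subset\Sigma_b$ for some $\sigma_0$.

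\emph{Step 3: the threshold and the sign of its endpoint.} If $\Sigma_b=\R$ we are in the first alternative. Otherwise $\Sigma_s\ne\emptyset$ is bounded above by $\sigma_0$; put $\sigma^*=\sup\Sigma_s=\inf\Sigma_b\in\R$, so residue holds on $(-\infty,\sigma^*)$ and spreading on $(\sigma^*,\infty)$, and it remains to prove $\sigma^*\in\Sigma_s$. Suppose instead $v_{\sigma^*}=\mathcal V_b$. Fix $\rho$ as in Step 2. Since $\mathcal V_b-V_\rho>0$ on $[-\rho,\rho]$ and $u_{\sigma^*}(\cdot,t)\to\mathcal V_b$ uniformly there, there are $T>0$ and $\varepsilon_0>0$ with $u_{\sigma^*}(\cdot,T)\ge V_\rho+\varepsilon_0$ on $[-\rho,\rho]$; together with $u_{\sigma^*}\ge0$ and $V_\rho\equiv0$ off $[-\rho,\rho]$ this gives $u_{\sigma^*}(\cdot,T)\ge V_\rho$ on $\R$. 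By continuous dependence of the solution at the positive time $T$ on the initial datum (valid for the rough data in \eqref{ini-cond-2} by parabolic smoothing), there is $\delta>0$ so that $|\sigma-\sigma^*|<\delta$ implies $u_\sigma(\cdot,T)\ge V_\rho$ on $\R$; then $u_\sigma(\cdot,T+s)\ge w_\rho(\cdot,s)\nearrow\mathcal V_b$, so $v_\sigma=\mathcal V_b$ for all such $\sigma$ — in particular for some $\sigma<\sigma^*$, contradicting residue there. Hence $v_{\sigma^*}=\mathcal V_s$, i.e. residue holds on all of $(-\infty,\sigma^*]$, which closes the proof.

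The step I expect to be the main obstacle is Step 3: making ``spreading'' an open condition on $\sigma$ requires exactly a compactly supported subsolution lying below $\mathcal V_b$ whose forward orbit rises to $\mathcal V_b$, so that the mere closeness of $u_\sigma(\cdot,T)$ to $\mathcal V_b$ on a bounded interval — all that continuous dependence provides — can be propagated to $t=\infty$; producing such a $V_\rho$ and verifying $V_\rho\le\mathcal V_b$ leans on the phase-plane description of \eqref{equ1.1} in Section 2, and is also where the hypothesis $L=L^*$ (rather than $L<L^*$) is felt, since $V_\rho$ must just barely fit. A secondary technical point, dealt with by the far-left bump in Step 1, is excluding the limits $0$ and $\mathcal V_s(-\cdot)$ so that the alternative is genuinely between $\mathcal V_b$ and $\mathcal V_s$.
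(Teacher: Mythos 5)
Your proof has the same skeleton as the paper's: reduce to $\mathcal{S}=\{\mathcal{V}_b,\mathcal{V}_s\}$ via the general convergence theorem, use the monotonicity of $\phi$ to make the spreading set an upper half-line, and prove openness by freezing the solution at a finite time above a compactly supported subsolution and invoking continuous dependence. The genuine difference is the subsolution. The paper takes the compactly supported stationary solution $V_a$ of the \emph{pure bistable} equation (the $\Gamma_3$ trajectory, plateau $a\in(\theta,1)$) and places it entirely to the right of $[-L,L]$, where it is an exact stationary solution of \eqref{main-eq}; comparison pins the $\omega$-limit above $V_a$, and since $\mathcal{V}_s\le\theta$ on $[L,\infty)$ that limit must be $\mathcal{V}_b$ (Lemma \ref{lem3.5}). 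You instead build a symmetric compactly supported subsolution $V_\rho$ of the full problem straddling the unfavorable region and ride its increasing orbit $w_\rho\nearrow\mathcal{V}_b$. This can be made to work, but it carries two burdens the paper avoids. First, the existence of $V_\rho$ with $V_\rho\le\mathcal{V}_b$ at the critical width $L=L^*$ is not provided in Section 2 and needs its own phase-plane verification (e.g.\ take the cosh-parameter $a$ slightly below that of $\mathcal{V}_b$ so that $a\cosh L>\theta$; the outer energy then lies strictly between $0$ and $2\int_0^1 f_l$, and the outer trajectory returns to $v=0$ with nonzero slope). Second, your identification of $\lim_{t}w_\rho$ as $\mathcal{V}_b$ is under-justified: excluding $\mathcal{V}_s$ and its reflection is not enough, because \eqref{v-eq} has many other positive stationary solutions (ground states, solutions with periodic tails, reflections; see Remark \ref{rem:general-bi-ss1}). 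You need that the limit is symmetric, everywhere positive, and exceeds $\theta$ at some $|x|>L$, which forces its outer pieces onto $\Gamma_1$ and hence identifies it with $\mathcal{V}_b$. Two smaller slips: the plateau height of a compactly supported bistable subsolution must lie in $(\theta,1)$, not $(\alpha,1)$; and the Step 1 claim that $\mathcal{V}_b,\mathcal{V}_s$ are the only positive stationary solutions with $v(-\infty)=1$ overlooks the $v_9$-type solutions with periodic right tails --- they happen not to exist when $L=L^*$, but the clean route is simply to quote Theorem \ref{thm:conv}, which already confines the limit to $\mathcal{S}$. With these points filled in your argument is correct; the paper's one-sided bump is the shorter path to the same end.
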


When $L> L^*$, the equation \eqref{equ1.1} has exactly three positive stationary solutions: the big solution $\mathcal{V}_b$ and the small solution $\mathcal{V}_s$ as above, and another solution $\mathcal{V}_g \in W^2_{p, loc}(\R)$ for any $p>1$ which lies between $\mathcal{V}_s$ and $\mathcal{V}_b$, and
\begin{equation}\label{V-g-property}
 \left\{
 \begin{array}{l}
 \mathcal{V}_g(-\infty)=1,\quad \mathcal{V}_g(+\infty)=0,\quad \mathcal{V}_s(x) < \mathcal{V}_g(x)< \mathcal{V}_b(x) \mbox{ for all }x\in \R,\\
 \mathcal{V}_g \mbox{ has a unique local minimum }x_1 \in (-L,L) \mbox{ and a unique local maximum }x_2 >L.
 \end{array}
 \right.
\end{equation}
We call $\mathcal{V}_g$ a {\bf transition (stationary) solution}, and say $u$ is a {\bf transition solution} of (P) if it converges to $\mathcal{V}_g$ in the topology of $L^\infty_{loc}(\R)$.
As we expect before, when $L>L^*$, the propagation is heavily effected by the unfavorable region and so residue happens easier. Nevertheless, we have complicated situations for the asymptotic behavior of the solutions in this case, that is, all of residue, transition and spreading are possible.

\begin{thm}[Trichotomy result for large $L$]\label{thm1.3}
Assume $L >L^*$ and $\phi$  satisfies   \eqref{ini-cond-2}. For any $\sigma\in \R$, let $u_\sigma (x,t)$ be a time-global solution of \eqref{main-eq} with initial data $u_0 = \phi(x-\sigma)$. Then there exist $-\infty< \sigma_* \leq \sigma^* <\infty$ such that
\begin{itemize}
\item[(i)] spreading happens when $\sigma \in (\sigma^*,+\infty)$;
\item[(ii)] residue happens when $\sigma \in (-\infty,\sigma_*)$;
\item[(iii)] $u_\sigma$ is a transition solution when $\sigma \in [\sigma_*,\sigma^*]$.
\end{itemize}
\end{thm}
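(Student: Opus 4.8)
\emph{Strategy and Step 1 (reduction to a monotone family).} The plan is to combine the general convergence theorem of Section~3 with the monotone dependence of the solution on the shift $\sigma$, and then to locate the two thresholds by separate super-/sub-solution arguments. First, since $\phi(\cdot-\sigma)$ still satisfies \eqref{ini-cond-2}, the general convergence result gives $u_\sigma(\cdot,t)\to$ a stationary solution in $L^\infty_{loc}(\R)$, and by the classification of Section~2 the only non-negative stationary solutions for $L>L^*$ are $0,\mathcal V_s,\mathcal V_g,\mathcal V_b$ (the last is the only \emph{even} one). The limit $0$ is excluded uniformly in $\sigma$: as $\phi(-\infty)=1>\alpha$, the datum $\phi(\cdot-\sigma)$ exceeds $\tfrac{1+\alpha}{2}$ on a left half-line, hence on a half-line contained in $(-\infty,-L)$; a compactly supported Aronson--Weinberger bistable sub-solution placed far to the left of $-L$ lies below $u_0$ and, being supported where $f(x,\cdot)$ is the bistable nonlinearity, is also a sub-solution of \eqref{main-eq}, so comparison with the bistable equation on $(-\infty,-L)$ with zero Dirichlet datum at $-L$ yields $\liminf_{t\to\infty}u_\sigma(x,t)>0$ for $x<-L$. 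Thus $u_\sigma\to w_\sigma\in\{\mathcal V_s,\mathcal V_g,\mathcal V_b\}$. Since $\phi$ is decreasing, $\phi(\cdot-\sigma)\le\phi(\cdot-\sigma')$ for $\sigma\le\sigma'$, so the comparison principle gives $u_\sigma\le u_{\sigma'}$ and hence $w_\sigma\le w_{\sigma'}$; because $\mathcal V_s<\mathcal V_g<\mathcal V_b$, the spreading set is an up-set, the residue set a down-set, and the transition set the interval in between. Set $\sigma^*=\inf\{\sigma:\text{spreading}\}$ and $\sigma_*=\sup\{\sigma:\text{residue}\}$.

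\emph{Step 2 (both thresholds are finite).} For $\sigma\gg1$ one has $\phi(x-\sigma)\ge(1-\eta)\chi_{[-R,R]}(x)$ for fixed small $\eta$ and large $R$. The solution of \eqref{main-eq} from $(1-\eta)\chi_{[-R,R]}$ is even, so its limit is an even non-negative stationary solution, i.e. $0$ or $\mathcal V_b$; for $R$ large it cannot be $0$ (a compactly supported bistable sub-solution now fits inside the long interval $[-R,-L]$), so it is $\mathcal V_b$. Hence $\liminf_t u_\sigma\ge\mathcal V_b$, while $u_\sigma\le1$ and the solution from the constant super-solution $1$ decreases to $\mathcal V_b$, giving $\limsup_t u_\sigma\le\mathcal V_b$; thus spreading occurs and $\sigma^*<\infty$. \textbf{The core of the theorem is that residue occurs for $\sigma$ sufficiently negative}, i.e. $\sigma_*>-\infty$, and here the hypothesis $L>L^*$ must be used essentially. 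The plan is to build, from the phase-plane picture of Section~2, a stationary super-solution $\overline V$ of \eqref{main-eq} with $\overline V(-\infty)=1$, $\overline V(+\infty)=0$ lying just above $\mathcal V_s$ — the existence of such an $\overline V$ squeezed against $\mathcal V_s$ being precisely what $L>L^*$ makes room for — and to correct its behaviour at $\pm\infty$ by exponential terms of size $\delta e^{\sqrt{1-\alpha}\,x}$ near $-\infty$ and $\delta e^{-\sqrt{\alpha}\,x}$ near $+\infty$ (admissible under either alternative in \eqref{ini-cond-2}) so that $\overline V\ge\phi(\cdot-\sigma)$ once $-\sigma$ is large; then $u_\sigma$ is dominated by the solution from $\overline V$, whose limit is a non-negative stationary solution $\le\overline V$, which together with the no-vanishing bound of Step~1 forces $w_\sigma=\mathcal V_s$. (Equivalently, for $-\sigma$ large the leakage $u_\sigma(L,t)$ stays below a threshold $\varepsilon<\alpha$ for all $t$, so on $[L,\infty)$ the bistable dynamics keeps $u_\sigma$ below a small steady profile, whence $\limsup_t u_\sigma(x_2,t)<\mathcal V_g(x_2)$ at the interior maximum $x_2>L$ of $\mathcal V_g$, pinning the limit to $\mathcal V_s$.)

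\emph{Step 3 (openness and conclusion).} The spreading set is open: if $u_{\sigma_0}\to\mathcal V_b$, choose $t_0$ large so that $u_{\sigma_0}(\cdot,t_0)\ge\mathcal V_g+\delta$ on a long interval; since $\mathcal V_g$ is linearly unstable and $\mathcal V_b$ is the only stationary solution above it, a datum that is $\ge\mathcal V_g+\delta$ on a long enough interval (and $\ge0$ elsewhere) dominates a compactly supported sub-solution whose solution increases to $\mathcal V_b$, hence spreads; by continuity of $\sigma\mapsto u_\sigma(\cdot,t_0)$ in $C_{loc}$ the same holds for $\sigma$ near $\sigma_0$. Symmetrically, using the linear stability of $\mathcal V_s$ and a compactly supported super-solution, the residue set is open. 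Therefore $\sigma^*\notin\{\text{spreading}\}$ and $\sigma_*\notin\{\text{residue}\}$; the two open sets are disjoint, so $\sigma_*\le\sigma^*$, and by connectedness of $\R$ their complement $[\sigma_*,\sigma^*]$ is a nonempty interval on which, by the monotone structure of Step~1, neither residue nor spreading is possible, i.e. $u_\sigma$ is a transition solution. With Step~2 this gives $-\infty<\sigma_*\le\sigma^*<\infty$ together with (i)--(iii).

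\emph{Main obstacle.} I expect the hard part to be the second half of Step~2: converting ``$L>L^*$'' — a property of the reaction term read off from the phase plane of \eqref{equ1.1} — into a super-solution that simultaneously sits just above $\mathcal V_s$ and dominates the incoming profile $\phi(\cdot-\sigma)$ for $\sigma\ll0$, the difficulty being the mismatch between the decay rates of $\phi$ prescribed in \eqref{ini-cond-2} and the intrinsic decay rates of the stationary solutions. The openness step needs the same circle of perturbed sub-/super-solutions but is comparatively routine once the stability of $\mathcal V_b,\mathcal V_s$ and the instability of $\mathcal V_g$ are available.
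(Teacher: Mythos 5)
Your overall skeleton (monotone family in $\sigma$, general convergence to $\mathcal S$, spreading for $\sigma\gg1$ via a compactly supported bistable sub-/stationary solution, residue for $\sigma\ll-1$ via a super-solution, openness of both extreme sets plus connectedness of $\R$) coincides with the paper's. But the two steps you flag as hard are not just hard --- your proposed treatments of them fail, and this is exactly where the paper does something different.

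First, the super-solution for $\sigma\ll-1$. You propose to take a stationary profile just above $\mathcal V_s$ with $\overline V(+\infty)=0$ and to correct it by $\delta e^{-\sqrt\alpha\,x}$ near $+\infty$. This fails twice. (i) The additive correction has the wrong sign: writing $v=\mathcal V_s+\delta e^{-\sqrt\alpha x}$ one gets $v''+f_r(v)=\bigl[f_r'(\xi)+\alpha\bigr]\delta e^{-\sqrt\alpha x}$ with $f_r'(\xi)+\alpha=\xi\,(2(1+\alpha)-3\xi)>0$ for small $\xi>0$, so $v$ is a \emph{sub}-solution near $+\infty$, not a super-solution. (ii) More fundamentally, condition \eqref{ini-cond-2} allows $\lim_{x\to+\infty}\phi(x)/e^{-\sqrt\alpha x}=\infty$ (e.g.\ a slower exponential or polynomial tail), and then \emph{no} shift $\phi(\cdot-\sigma)$ lies below any profile that is $O(e^{-\sqrt\alpha x})$ at $+\infty$; your parenthetical ``admissible under either alternative'' is false for the $\infty$ alternative. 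The paper's construction \eqref{def-overline-V} avoids both problems precisely by \emph{not} sending the super-solution to $0$: $\overline V$ ends, for $x\ge L$, in a periodic orbit $\Gamma_4$ around the center $(\alpha,0)$, so $\overline V\ge\varepsilon_1>0$ on all of $\R$ and dominating a small tail is trivial, while the fact that the middle piece fits inside width $2L$ is where $L>L^*$ enters. Your ``equivalently'' variant (bounding the leakage $u_\sigma(L,t)<\alpha$ for all $t$) begs the question: producing that uniform-in-time bound at $x=L$ \emph{is} the super-solution construction.

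Second, openness of the residue set is not ``comparatively routine.'' Linear stability of $\mathcal V_s$ is neither established in the paper nor usable as stated: the convergence $u_{\sigma_1}\to\mathcal V_s$ is only in $L^\infty_{loc}$, so at any finite time you control $u_{\sigma_1}(\cdot,T)$ only on compact sets, and to conclude residue for nearby $\sigma$ you must trap $u_{\sigma_1}(\cdot,T)$ below a super-solution on \emph{all} of $\R$, including $x\to+\infty$ where local convergence says nothing. This is the content of the paper's Claims 1--2 in Step 2: a zero-number/reflection/Hopf-lemma argument showing the right-most local maximum of $u_{\sigma_1}$ can never pass a fixed $M_2$, whence $u_{\sigma_1}(x,t)\le u_{\sigma_1}(M_2,t)\le 2\varepsilon$ for $x\ge M_2$ and all large $t$, which finally puts $u_{\sigma_1}(\cdot,T_2)\le\overline V-\varepsilon$ globally; continuous dependence then yields openness. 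Without an argument of this kind your Step 3 does not close, and consequently neither the closedness of $[\sigma_*,\sigma^*]$ nor part (iii) is established. (Minor further points: the limit of the solution emanating from $(1-\eta)\chi_{[-R,R]}$ is not covered by the convergence theorem, whose hypothesis \eqref{ini-cond-2} is monotone data; the paper instead compares directly with the compactly supported stationary solution $V_a$ of \eqref{ss-bistable} placed in $[L,L+2x_0]$, which is all that is needed.)
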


By these results we know that, from a ecological point of view, an unfavorable region
will definitely affect the propagation quantitatively, no matter how large the region is.
On the other hand, whether the propagation is affected qualitatively depends on the size of the region. More precisely, when the width $2L$ of the region is smaller than the critical size $2L^*$ depending on $f$, the effect is a quantitative one: propagation is always  successful but with a defective limit, that is, the limit $\mathcal{V}_b<1$. When the width is larger than the critical value, the effect can be a qualitative one: small initial population may be blocked and can not propagate through  the region successfully. As a result, it converges to a residue or transition stationary solution. Note that, these solutions remain positive on the right side with small value just because the problem is a Cauchy one, but not means the species can pass through the unfavorable region smoothly.

For the clarity of our statements, in the previous theorems we adopted a cubic bistable nonlinearity and a constant death rate, outside of and in the unfavorable region, respectively. Of course, one can consider more general reaction terms. For example, $f(x,u)$ is a general bistable nonlinearity $f_b(u)$ outside of the unfavorable region and
a general negative growth rate $f_m(u)<0\ (u>0)$ in the unfavorable region.
In this case, analogue of our current conclusions can be derived in a similar way.
More precisely, there are two positive real numbers $L^* \geq L_*>0$ depending only on $f$ such that the asymptotic behavior for the solutions of \eqref{main-eq} are as the following.
\begin{enumerate}[(1).]
\item When $L<L_*$, the equation \eqref{main-eq} has only positive stationary solutions of $\mathcal{V}_b$ type (maybe not unique), spreading happens in the sense that each solution $u$ converges to one of them.

\item When $L\in [L_*, L^*)$ (or $L=L^*$ in case $L_* = L^*$ as in our current case), the equation \eqref{main-eq} has positive stationary solutions of both $\mathcal{V}_b$ and $\mathcal{V}_s$ types (maybe not unique), and a dichotomy results hold: a solution with large (resp. small) initial data converges to a stationary solution of the type $\mathcal{V}_b$ (resp. $\mathcal{V}_s$).

\item When $L\geq L^*$ (or $L>L^*$ in case $L_* = L^*$ as in our current case),  \eqref{main-eq} has positive stationary solutions of $\mathcal{V}_b, \mathcal{V}_g$ and $\mathcal{V}_s$ types (each type maybe not unique), a trichotomy results hold: a solution with large (resp. small, medium-sized) initial data converges to a stationary solution of the type $\mathcal{V}_b$ (resp. $\mathcal{V}_s$, $\mathcal{V}_g$).
\end{enumerate}
The proof of these results is similar as ours, and the details will be given in a forthcoming paper (see more in Remarks \ref{rem:general-bi-ss2}).

\begin{remark}\rm
If the reaction term is a monostable one like $f(u)=u(1-u)$, there is the so-called hair-trigger effect (e.g., \cite{AW1,AW2}), which says that any nonnegative, non-trivial solution will converge to $1$, no matter how small the initial data is. Hence, if we use monostable model instead of the bistable one outside of the unfavorable region, then the asymptotic behavior will be very simple: spreading always happens.
\end{remark}

\begin{remark}\label{rem:to-Bere-1}
\rm
As we have mentioned above, Berestycki et al. \cite{BRR} also studied the equation \eqref{main-eq} as a model to understand the phenomena of crime. They proved that, when $L<L_*$, the wave propagates in the sense that $u(x,t)>1-\varepsilon$ for any small $\varepsilon>0$ and all large $x,t$; when $L\geq L^*$, the propagation is blocked in the sense that there exists a stationary solution as our $\mathcal{V}_s$. We see that these results correspond to our spreading and residue phenomena, but without the convergence to the corresponding stationary solutions. Moreover, we complete the dichotomy (in case $L=L^*$) and trichotomy (in case $L>L^*$) results, as well as the threshold for the initial data as in Theorem \ref{thm1.3}.
\end{remark}

The rest of the paper is organized as follows. In Section 2, we present the positive stationary solutions by using a phase plane analysis. In Section 3, we give a general convergence result for the solutions of \eqref{main-eq} with initial data satisfying \eqref{ini-cond-2}. In Section 4, we prove our main theorems on the dichotomy and  trichotomy results.

\section{Positive Stationary Solutions}

In this section we will present the definition of the critical value $L^*$ of the unfavorable region width, and construct some useful positive stationary solutions by using the phase plane analysis.

\subsection{Phase plane}
We will work on the equation of stationary solutions:
\begin{equation}\label{v-eq}
v'' +f(x, v)=0,\quad x \in \mathbb{R},
\end{equation}
for $f$ defined by (H). For convenience, in what follows we write
$$
f_l (u) = f_r(u) := u (u-\alpha) (1-u),\quad f_m (u):= -u,
$$
then $f(x,u)=f_l(u) =f_r(u)$ when $|x|\geq L$ and $f(x,u)=f_m(u)$ when $|x|<L$. Each solution of equation \eqref{v-eq} consists of three parts
 \begin{equation*}
v=
\begin{cases}
v_{l}(x), & x \leq L,\\
v_{m}(x), &|x| < L,\\
v_{r}(x), & x \geq L,
\end{cases}
\end{equation*}
which satisfies the following equations
\begin{align}
v''_{l}+ f_l (v_l)=0, \ \ \ \ \ x < -L,\ \label{vlequ}\\
v''_{m}+ f_m (v_m)=0, \ \  |x| < L, \  \label{vmequ}\\
v''_{r}+ f_r(v_r)=0, \ \ \ \ x > L, \ \ \ \label{vrequ}
\end{align}
with matching conditions on the boundaries of the unfavorable region:
\begin{equation}\label{match-cond}
\left\{
 \begin{array}{ll}
v_{l}(-L-0)=v_{m}(-L+0),& v'_{l}(-L-0)=v'_{m}(-L+0),\\
v_{m}(L-0)=v_{r}(L+0), & v'_{m}(L-0)=v'_{r}(L+0).
\end{array}
\right.
\end{equation}
Now, we seek for solutions of the problems \eqref{vlequ}-\eqref{vrequ} by phase plane analysis. First, these equations can be converted into the following systems:
\begin{equation}\label{odes2}
\begin{cases}
v'_{i}= w_{i}, \\
w'_{i}=-f_{i}(v_{i}),
\end{cases}
\end{equation}
for $i=\{l,m,r\}$. These systems are equivalent to the following first-order ordinary differential equations
\begin{equation}\label{ode1}
w_i d w_{i} = - f_{i}(v_{i}) dv_i.
\end{equation}
They are solved explicitly:
\begin{equation}\label{ode1sol}
w^{2}_{i}= C- 2\int^{v_{i}}_0 f_{i}(s)ds,
\end{equation}
for some $C$ to be determined. Taking different $C$ we can sketch the phase diagram of \eqref{vlequ}-\eqref{vrequ} or \eqref{odes2} as in Figure 1. For convenience, we present some details here.
\begin{figure}[H]
\centering
\includegraphics[scale=0.5]{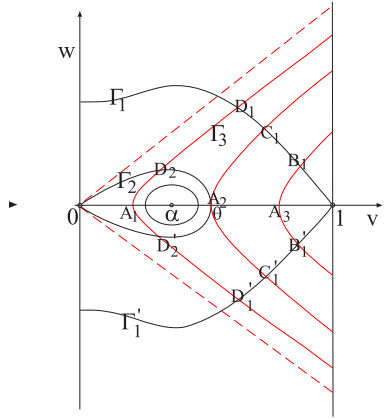}
\caption{Trajectories of the system \eqref{odes2}.}
\end{figure}
In Figure 1, the trajectory $\Gamma_1$ is the graph of \eqref{ode1sol} with
$i=l$ and $C=2 \int_0^1 f_l(s) ds>0$. It corresponds to a solution $V_1$ of \eqref{vlequ} or \eqref{vrequ}, which takes $0$ at some point $x_0$ and is strictly increasing for $x>x_0$ with
$$
0< 1- V_1(x) \sim e^{-\sqrt{1-\alpha}\; x},\quad x\to \infty.
$$
Similarly, the trajectory $\Gamma'_1$ corresponds to a solution $\tilde{V}_1(x) \equiv V_1(2 x_0 -x)$.

$\Gamma_2$ is the graph of \eqref{ode1sol} with $i=l$ and $C=0$. It is a homoclinic orbit, and corresponds to the so-called {\it ground state solution} $V_2$ of \eqref{vlequ} or \eqref{vrequ}, which satisfies
$$
V_{2}(x)= V_{2}(-x) \mbox{ and } V'_{2} (x)<0 \mbox{ for } x>0,\quad
 V_{2}(0)= \theta, \quad V_2 (x) \sim e^{-\sqrt{\alpha}\; x}\mbox{ as } x\to +\infty,
$$
where $\theta = \theta(\alpha)\in (\alpha, 1)$ is defined by $
\int^{\theta}_0 f_l(u)du = \int^{\theta}_0 f_r(u)du =0$, that is,
$$
\theta = \theta (\alpha) := \frac{4(\alpha+1)-\sqrt{16(\alpha+1)^2-72\alpha}}{6}.
$$

The trajectory $\Gamma_3$ is the graph of \eqref{ode1sol} with $i=l$ and $C=2\int_0^a f_l(s) ds$ for any given $a \in (\theta,1)$. %It starts from a point $(0, \sqrt{C})$, passes through  $(0,q_0)$ and ends at $(0,- \sqrt{C})$.
It gives a compactly supported stationary solution $V_a$ of the bistable reaction diffusion equation, which satisfies, for some $x_0\in (0,\infty)$ depending on $a$
\begin{equation}\label{ss-bistable}
V_a(\pm x_0)=0,\quad V_a (0)=q_0,\quad V_a (-x)=V_a (x) \mbox{ and }V'_a(x)<0 \mbox{ in } (0,x_0].
\end{equation}
Such solutions will be used in Section 3 to give a sufficient condition for the spreading phenomena.

The singular point $(\alpha, 0)$ is a center. For any $a\in (0,\alpha)$, there is a closed orbit $\Gamma_4$ surrounding $(\alpha,0)$, which is the graph of \eqref{ode1sol} with $i=l$ and $C=2\int_0^a f_l(s) ds$, and corresponds to a periodic solution, denoted by $V_a^{per}$.

For any small $\delta>0$, the trajectory $\Gamma^*_5$ passing through $F(1,-\delta)$ is the graph of \eqref{ode1sol} with $i=l$ and $C=\delta^2 + 2\int_0^1 f_l(s) ds$. It corresponds to a solution $V^*_5$ which satisfies
$$
V^*_5(x)<0 \mbox{ for } x<0 ,\quad V^*_5(x)\to +\infty \mbox{ as }x\to -\infty,\quad V^*_5(0)=0 \mbox{ and }V^*_5 (x_0) =1,
$$
for some $x_0<0$ (here we normalize the function as zero at $x=0$).

Finally, $\Gamma_0$ is a trajectory of the system \eqref{odes2} for $i=m$. For any $a\in (0,1)$, there is a $\Gamma_0$ passing through $A(a,0)$, which is the graph of the function $w^2_{m}=v^2_{m}-a^2\;(v_{m} \geq a)$. This trajectory gives a solution $V_0$ of \eqref{v-eq} in the interval $[-L,L]$, and can be expressed explicitly as
\begin{equation}\label{def-vm}
   V_0 (x; a)= a \cosh(x) \geq a.
\end{equation}
%In addition, the roots $x_\pm$ of $v_{m}(x;a)=1$ tend to $\pm \infty$, respectively, as $a\to 0$, and $x_\pm \to 0$ as $a\to 1-0$.

\subsection{Trajectory pieces corresponding to the solutions on $[-L,L]$}

We will combine suitable trajectory pieces of \eqref{vlequ}-\eqref{vrequ} together to give solutions of the problem \eqref{v-eq}. Note that each one of such solutions satisfies \eqref{vmequ} in a domain with exact width $2L$. So we need to specify the relationship between the arc lengths of the trajectory pieces and the life spans of corresponding solutions.

We introduce a notation. If $AB$ is a trajectory piece in the phase plane corresponding to a solution $V(x)$ of \eqref{v-eq} such that
$$
(V(x_1), V'(x_1)) =A, \quad (V(x_2), V'(x_2)) =B,
$$
then $|x_1 -x_2|$ is the life span of the solution given by this piece $AB$. In what follows we use $X_{AB}$ to denote this span $|x_1 -x_2|$.

In Figure 1, we see that if $A=(a,0)\in \Gamma_0$ lies between $(0,0)$ and $(\theta,0)$, then $\Gamma_2$ and $\Gamma_0$ has exactly two intersection points $D_2,\ D'_2$; if $a = \theta $, then there is a unique intersection point $A=(\theta,0)$ between them; if $a \in (\theta,1)$,  there is no intersection point between them. On the other hand, any $\Gamma_0$ has a unique intersection point $D_1$ with $\Gamma_1$.
Denote $D_i = (v_i, w_i)\ (i=1,2)$. Since the stationary solution corresponding to $\Gamma_0$ is given by $V_0$ in \eqref{def-vm}, we see that
\begin{equation}\label{def-a-r-R}
V_0 (0)=a,\quad V_0 (r)= v_2,\quad  V_0 (R) = v_1,
\end{equation}
for some $R= R(a) >r= r(a) >0$, that is, $X_{AD_2}=r$ and $X_{AD_1}=R$.
%Then
%$$
%V_0 (x)|_{[0,R]},\quad V_0 (x)|_{[0,r]},\quad V_0 (x)|_{[r,R]}
%$$
%are the solution truncations corresponding to the trajectory pieces $A D_1$, $A D_2$ and $D_2D_1$ on $\Gamma_0$.

We now study the monotonicity of $R(a),\ r(a)$ and $\ell(a):=R(a)-r(a)= X_{D_1 D_2}$.

\begin{lem}\label{lem2.1}
All of $R, r$ and $\ell$ given above are strictly decreasing in the parameter $a\in (0,1)$.
\end{lem}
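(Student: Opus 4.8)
The plan is to work throughout in the phase plane of Figure~1 and reduce the lemma to elementary properties of the intersection abscissas $v_1(a),v_2(a)$. Along the trajectory $\Gamma_0$ through $A=(a,0)$ the stationary solution is $V_0(x;a)=a\cosh x$ by \eqref{def-vm}, so by \eqref{def-a-r-R},
\[
R(a)=\operatorname{arccosh}\!\Big(\tfrac{v_1(a)}{a}\Big),\qquad r(a)=\operatorname{arccosh}\!\Big(\tfrac{v_2(a)}{a}\Big),
\]
where $v_i=v_i(a)$ is the $v$-coordinate of $D_i$ and $w_i:=\sqrt{v_i^2-a^2}>0$ its $w$-coordinate. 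Writing $F(v):=\int_0^v f_l(s)\,ds$ and $p(v):=v+f_l(v)=v\big(1+(v-\alpha)(1-v)\big)$, \eqref{ode1sol} gives $w_1^2=2(F(1)-F(v_1))$ on $\Gamma_1$ and $w_2^2=-2F(v_2)$ on $\Gamma_2$, so the conditions $D_1\in\Gamma_0\cap\Gamma_1$ and $D_2\in\Gamma_0\cap\Gamma_2$ read $h(v_1)=a^2+2F(1)$ and $h(v_2)=a^2$ with $h(v):=v^2+2F(v)$, $h'(v)=2p(v)$. Since $\alpha\in(0,\tfrac12)$ forces $p>0$ on $(0,1]$, the map $h$ is strictly increasing there; hence $v_1,v_2$ are uniquely determined, strictly increasing in $a$, and satisfy $a<v_2<v_1<1$ ($v_2<\theta$ precisely when $a<\theta$, the natural range for $r$ and $\ell$), $F(v_1)<F(1)$, $F(v_2)<0$.

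For $R$ and $r$: implicit differentiation of $h(v_i)=a^2+c_i$ gives $v_i'(a)=a/p(v_i)$, and a short computation using $v_i^2-a^2=w_i^2$ then yields
\[
\frac{dR}{da}=\frac{-G(v_1)}{a\,p(v_1)\,w_1},\qquad \frac{dr}{da}=\frac{-H(v_2)}{a\,p(v_2)\,w_2},
\]
with $G(v):=v f_l(v)+2(F(1)-F(v))$ and $H(v):=v f_l(v)-2F(v)$. So it remains to check $G>0$ on $[0,1)$ and $H>0$ on $(0,\theta]$. Both have the same derivative $G'(v)=H'(v)=v f_l'(v)-f_l(v)=v^2\big((1+\alpha)-2v\big)$, positive on $(0,\tfrac{1+\alpha}{2})$ and negative afterwards; combined with $G(0)=2F(1)>0$, $G(1)=0$, $H(0)=0$ and $H(\theta)=\theta f_l(\theta)>0$ (using $F(\theta)=0$ and $\theta\in(\alpha,1)$), a shape argument — splitting into $\theta\le\tfrac{1+\alpha}{2}$ and $\theta>\tfrac{1+\alpha}{2}$ when treating $H$ — gives both statements, hence $R'<0$ and $r'<0$.

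The substantive step is $\ell=R-r$. Subtracting, $\ell'(a)=\tfrac1a\big(\tfrac{H(v_2)}{p(v_2)w_2}-\tfrac{G(v_1)}{p(v_1)w_1}\big)$; since $G(v_1)=w_1^2+v_1f_l(v_1)$ and $H(v_2)=w_2^2+v_2f_l(v_2)$, both terms are values of $\Psi(v,w):=\dfrac{w^2+v f_l(v)}{w\,p(v)}$, which on $\Gamma_0(a)=\{v^2-w^2=a^2,\ v,w>0\}$ equals $\dfrac{v}{w}-\dfrac{a^2}{w\,p(v)}$, evaluated at $D_1$ and $D_2$. So $\ell'(a)<0$ will follow once $\Psi$ is shown to be strictly increasing along $\Gamma_0(a)$ from $D_2$ to $D_1$. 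Differentiating $\Psi$ along $\Gamma_0(a)$ (using $dw/dv=v/w$) gives $\frac{d}{dv}\Psi=\dfrac{a^2}{w^3 p(v)^2}\big(w^2p'(v)-p(v)f_l(v)\big)$, so the lemma finally reduces to $w^2p'(v)>p(v)f_l(v)$, i.e.\ $(v^2-a^2)\big(1+f_l'(v)\big)>\big(v+f_l(v)\big)f_l(v)$, on the arc $v\in[v_2,v_1]$. For $v\le\alpha$ this is immediate, as $p,p'>0$ while $f_l(v)\le0$. I expect the case $v\in(\alpha,v_1]$ to be the main obstacle: it has to be extracted from the explicit cubic $f_l(v)=v(v-\alpha)(1-v)$ together with the arc relation $a^2=h(v_2)$ (so $w^2=v^2-v_2^2-2F(v_2)\ge-2F(v_2)>0$) and the bound $v_1<1$, presumably via a case analysis bounding the quartic $(v^2-a^2)(1+f_l')-(v+f_l)f_l$ from below along the arc. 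If a clean pointwise estimate proves awkward, one can instead try to establish $\Psi(D_1)>\Psi(D_2)$ directly from the explicit expressions of $w_1,w_2$ on $\Gamma_1,\Gamma_2$, or exploit the concavity of $\operatorname{arccosh}$ together with the already-proved monotonicity of $v_1/a$ and $v_2/a$ in $a$.
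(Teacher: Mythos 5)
Your treatment of $R$ and $r$ is correct and is essentially the paper's argument: your identity $R'(a)=-G(v_1)/\bigl(a\,p(v_1)\,w_1\bigr)$ is the paper's formula \eqref{d-coshR-da} rewritten, your shape argument for $G>0$ on $[0,1)$ (via $G'=v^2(1+\alpha-2v)$, $G(0)=2F(1)>0$, $G(1)=0$) is a valid alternative to the paper's trick of differentiating in $\alpha$, and your $H>0$ argument on $(0,\theta]$ correctly fills in the case the paper dismisses as ``similar''.

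The gap is exactly the step you flag for $\ell=R-r$, and it cannot be closed, because the inequality $w^2p'(v)>p(v)f_l(v)$ genuinely fails at the lower end of the arc once $a$ is close to $\theta$: there $v_2\to\theta$ and $w_2^2=-2F(v_2)\to0$, while $p(v_2)f_l(v_2)\to p(\theta)f_l(\theta)>0$ since $\theta\in(\alpha,1)$, so $\tfrac{d\Psi}{dv}<0$ near $D_2$ and $\Psi$ is not monotone along the arc. Worse, the conclusion itself fails near $a=\theta$. Your own formula $r'(a)=-H(v_2)/\bigl(a\,p(v_2)\,w_2\bigr)$ has $H(v_2)\to\theta f_l(\theta)>0$ and $w_2\to0^+$; equivalently $v_2(a)-a\sim c(\theta-a)$ with $c=f_l(\theta)/(\theta+f_l(\theta))>0$, so $r(a)\sim K\sqrt{\theta-a}$ and $r'(a)\to-\infty$ as $a\to\theta^-$, whereas $R'(a)$ stays bounded because $w_1$ is bounded away from $0$ (as $v_1<1$ strictly). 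Hence $\ell'(a)=R'(a)-r'(a)\to+\infty$, and $\ell$ is strictly \emph{increasing} on a left neighborhood of $\theta$; a numerical check at $\alpha=0.3$ gives $\ell(0.45)\approx0.431<\ell(0.47)\approx0.451<\ell(\theta)\approx0.494$. So neither your pointwise estimate nor the fallback routes you mention can succeed. You should also be aware that the paper does not actually prove this part either: it asserts $\ell'<0$ after ``a tedious but trivial calculation'' supported by a numerical figure. The failure of monotonicity of $\ell$ near $a=\theta$ also undercuts \eqref{def-L*}, which claims $\ell(\theta)=\inf_{0<a<\theta}\ell(a)$; the correct threshold should be $\min_{0<a\le\theta}\ell(a)$, attained at an interior point, and uniqueness of the corresponding $\mathcal{V}_s$ would then need a separate argument.
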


\begin{proof}
(1). We first prove $\frac{dR}{da}<0$. Recall that the function of $\Gamma_0$ is
$w^2  = v^2 -a^2$, and the function of $\Gamma_1$ is $w^2 = 2\int_{v}^1 f_l(s) ds$.
Combining them together we obtain the $v$-coordinate $v_1$ for $D_1$:
\begin{equation}\label{equ 2.5}
v^2_1 -a^2 = 2\int_{v_1}^1 f_l(s) ds.
\end{equation}
Substituting the explicit formula \eqref{def-vm} $v_1 = V_0 (R)= a\cosh {R}$ into this equality we have
\begin{equation}\label{relation-R-alpha}
a^2 \cosh^2 (R) - a^2 = 2\int_{a \cosh (R)}^1 f_l (s) ds.
\end{equation}
Using the cubic bistable nonlinearity $f_l$ we can obtain
\begin{equation}\label{d-coshR-da}
a^2\frac{d [\cosh (R)] }{da} = \left. \frac{-2\int^1_{v_1 }f_l (s)ds- v_1  f_l (v_1 )}{ v_1 +f_l (v_1)} \right|_{v_1 = a\cosh (R)} <0.
\end{equation}
In fact, with
$$
F(v_1, \alpha):= 2\int^1_{v_1 }f_l (s)ds+ v_1  f_l (v_1)= -\frac12 v_1^4+ \frac13 (1+\alpha) v_1^3 +\frac{1-2\alpha}{6},
$$
we have $\frac{\partial F}{\partial \alpha}=\frac13 v_1^3- \frac13< 0$ and $F(v_1, \frac12)= -\frac12 v_1^4+ \frac12 v_1^3> 0$. Then \eqref{d-coshR-da} holds. This implies that
$$
\frac{dR}{da}<0.
$$

(2). In a similar way one can show that
%\begin{equation}\label{d-coshr-da}
%a^2\frac{d [\cosh (r)] }{da} = \left. \frac{-2\int^1_{v_2 }f_l (s)ds- v_2  f_l (v_2 )}{ v_2 +f_l (v_2)} \right|_{v_2 = a\cosh (r)} <0,
%\end{equation}
%and so
$
\frac{dr}{da}<0$.

(3). We now show that $\frac{d\ell }{da} = \frac{d(R-r)}{da} <0$. Substituting $f_l(u)= u (u-\alpha )(1-u)$ into \eqref{equ 2.5}, we see that the $v$-coordinate of $D_1$ is given by the root of
\begin{equation}\label{v1-equ-root}
-\frac12 v_1^4+ \frac{2(\alpha+1)}{3}v_1^3+(1-\alpha)v_1^2+\frac{2\alpha-1}{6}- a^2 =0.
\end{equation}
From the phase diagram we see that, for any $a\in (0,\theta(\alpha))$, this equation has a unique root $v_1(a;\alpha) \in (a,1)$. Similarly, the $v$-coordinate of $D_2$ is given by the unique root $v_2(a;\alpha)\in (a,\theta)$ of following equation
\begin{equation}\label{v2-equ-root}
-\frac12 v_2^4+ \frac{2(\alpha+1)}{3}v_2^3+(1-\alpha)v_2^2- a^2 =0,
\end{equation}
Noticing that $v_1 = a\cosh {R(a)}$ and $v_2 = a\cosh {r(a)}$, we can conclude that
\begin{equation}\label{l-express}
\ell(a)= R(a)-r(a)= \ln \frac{v_1(a;\alpha)+\sqrt{v_1^2(a;\alpha)-a^2}}{a}- \ln \frac{v_2(a;\alpha)+\sqrt{v_2^2(a;\alpha)-a^2}}{a}.
\end{equation}
A tedious but trivial calculation gives the formulas of $v_1,\ v_2$ and $\ell(a)$, which shows that $\frac{d\ell (a)}{da}<0$ for any given $\alpha\in (0,\frac12)$ and any $a\in (0,\theta(\alpha)]$. A numerical simulation result is illustrated in Figure 2.
\begin{figure}[H]
\centering
\includegraphics[scale=0.5]{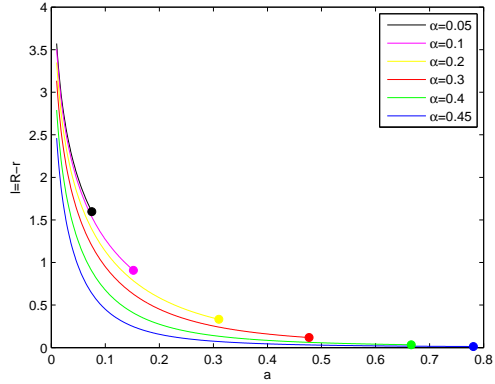}
\caption{For each given $\alpha\in (0,\frac12)$, $\ell(a)$ is strictly decreasing in $a\in (0,\theta(\alpha)]$. The $a$-coordinate of the end point of each curve is $\theta(\alpha)$.}
\end{figure}
\end{proof}

\subsection{Stationary solutions}

For any given $\alpha\in (0,\frac12)$, by the monotonicity of $\ell(a)$ on $a$, we see that
\begin{equation}\label{def-L*}
2 L^* := \ell(\theta(\alpha)) = \inf\limits_{0<a<\theta(\alpha)} \ell(a) >0.
\end{equation}
Clearly, this value depends only on $f(x,u)$. This is the critical value of $L$ appearing in our main theorems, and it plays a key role in the classification for the dynamics of the solutions.

Based on the monotonicity of $R,r,$ and $\ell$, we can construct some positive solutions of \eqref{v-eq} which will be used below.

\begin{prop}\label{prop:ss}
Let $L^*$ be the critical value defined by \eqref{def-L*}.

\begin{enumerate}[{\rm (a)}]
\item
In case $0<L<L^*$, the equation \eqref{v-eq} has exactly one positive solution $\mathcal{V}_b \in C^1(\R)\cap C^\infty (\R\backslash \{\pm L\})$ which satisfies \eqref{V-b-property} and
\begin{equation}\label{decay-to-1-rate}
0<1 - \mathcal{V}_b(x) \sim e^{-\sqrt{1-\alpha}\; |x|} \mbox{\ \ as\ \ }|x|\to \infty.
\end{equation}

\item In case $L=L^* $, the equation \eqref{v-eq} has at least two positive solutions, one is $\mathcal{V}_b$ as above, the other one $\mathcal{V}_s\in C^1(\R)\cap C^\infty (\R\backslash \{\pm L\})$ satisfies \eqref{V-s-property} and
\begin{equation}\label{V-s-property1}
0<1-\mathcal{V}_s(x) \sim e^{\sqrt{1-\alpha}\; x}\mbox{\ as\ }x\to -\infty,\quad 0<\mathcal{V}_s(x)\sim e^{-\sqrt{\alpha}\; x} \mbox{\ as\ }x\to \infty.
\end{equation}

\item In case $L>L^* $, the equation \eqref{v-eq} has at least three positive
solutions: $\mathcal{V}_b$, $\mathcal{V}_s$ as above, and $\mathcal{V}_g\in C^1(\R)\cap C^\infty (\R\backslash \{\pm L\})$ which satisfies, for some $x_1, x_2$ with $-L <x_1 <L <x_2$,
\begin{equation}\label{V-g-property1}
 \left\{
 \begin{array}{l}
  \mathcal{V}'_g(x)<0 \mbox{ for }x\in (-\infty, x_1)\cup (x_2, \infty),\quad \mathcal{V}'_g(x)>0 \mbox{ for }x\in (x_1, x_2),\\
   \mathcal{V}_g(x_1)\in (0,\theta),\quad \mathcal{V}_g(x_2)=\theta,\\
  0<1-\mathcal{V}_g(x) \sim e^{\sqrt{1-\alpha}\; x} \mbox{\ \ as\ \ }x\to -\infty,\quad 0< \mathcal{V}_g(x) \sim e^{-\sqrt{\alpha}\; x} \mbox{\ \ as\ \ }x\to \infty.
    \end{array}
  \right.
\end{equation}
 Moreover, \eqref{v-eq} has no other positive solutions satisfying $v(-\infty)=1$ and $v(\infty)=0$.

\item The solutions $\mathcal{V}_b, \mathcal{V}_s, \mathcal{V}_g$ are well-ordered, if they exist,
\begin{equation}\label{well-ordered}
\mathcal{V}_s(x) < \mathcal{V}_g(x) <\mathcal{V}_b(x),\quad x\in \R.
\end{equation}
\end{enumerate}
\end{prop}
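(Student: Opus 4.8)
The plan is to classify every positive solution of \eqref{v-eq} by gluing phase-plane trajectory pieces. Such a solution splits as $v=v_l$ on $(-\infty,-L]$, $v=v_m$ on $[-L,L]$, $v=v_r$ on $[L,\infty)$, where $v_l,v_r$ trace orbits of the bistable system \eqref{odes2} (with $i=l$) and $v_m$ an orbit of the $f_m$-system, i.e.\ a level curve of $v_m^2-(v_m')^2$, matched in $C^1$ by \eqref{match-cond}. First I would reduce the admissible pieces: a bounded positive solution satisfies $0<v\le1$ (at an interior maximum $-v''=f(x,v)\ge0$, which is impossible in $|x|<L$ and forces $v\le1$ in $|x|\ge L$, while a branch escaping to $+\infty$ is discarded as unbounded), so on each outer half-line $v_l,v_r$ is a bounded positive bistable orbit — a piece of $\Gamma_1$ or $\Gamma'_1$, a piece of $\Gamma_2$, a periodic orbit about $(\alpha,0)$, or a constant $\equiv\alpha$ or $\equiv1$ — and $v_m$ is a piece of some $\Gamma_0$ with parameter $a>0$: the remaining possibilities $v_m^2-(v_m')^2\le0$ are killed by a short check that $h(v):=v^2+2\int_0^v f_l$ and $v\mapsto v^2-2\int_v^1 f_l$ are strictly increasing (because $1+(v-\alpha)(1-v)\ge1-\alpha>0$), so their endpoint values cannot simultaneously fall on $\Gamma_2$, respectively $\Gamma_1$.

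Next I would build $\mathcal{V}_b,\mathcal{V}_s,\mathcal{V}_g$: each is fixed by the parameter $a$ of its $\Gamma_0(a)$-arc together with the choice of orbits for $v_l,v_r$, and the demand that $v_m$ fill the window of width $2L$ becomes, via \eqref{def-vm} and the quantities $R,r$ of \eqref{def-a-r-R}, a width equation in $a$. For $\mathcal{V}_b$ (symmetric; $v_l$ on $\Gamma'_1$, $v_r$ on $\Gamma_1$, $v_m$ the $\Gamma_0(a)$-arc through $A=(a,0)$) it reads $R(a)=L$; since $R$ is continuous, strictly decreasing on $(0,1)$ with $R(a)\to\infty$ as $a\to0^+$ and $R(\theta)=2L^*$ (Lemma \ref{lem2.1}, \eqref{def-L*}), there is a unique root $a_b$ for every $L>0$, and $a_b\cosh L=v_1(a_b)<1$ (by \eqref{equ 2.5}) yields $\mathcal{V}_b<1$, \eqref{V-b-property} and \eqref{decay-to-1-rate}. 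For $\mathcal{V}_s$ ($v_l$ on $\Gamma'_1$, $v_r$ on the decreasing branch of $\Gamma_2$, $v_m$ a monotone $\Gamma_0(a)$-arc avoiding $A$) it reads $\ell(a)=R(a)-r(a)=2L$; using $\ell(\theta)=2L^*$, $\ell$ strictly decreasing, and $\ell(a)\to\infty$ as $a\to0^+$ (here $v_2(a)\sim a/\sqrt{1-\alpha}$ keeps $r(a)$ bounded while $R(a)\to\infty$), this is uniquely solvable iff $L\ge L^*$, with $a=\theta$ exactly at $L=L^*$. For $\mathcal{V}_g$ ($v_l$ on $\Gamma'_1$, $v_m$ a $\Gamma_0(a)$-arc through $A$ with interior minimum $x_1$, $v_r$ the ground-state piece that rises from the matching point to its peak $\theta$ and descends to $0$) it reads $R(a)+r(a)=2L$, uniquely solvable iff $L\ge L^*$ and producing a genuine transition solution ($-L<x_1<L<x_2$) exactly when $L>L^*$ (at $L=L^*$ it collapses onto $\mathcal{V}_s$); the decay rates in \eqref{V-s-property1} and \eqref{V-g-property1} come from the linearizations at the saddles $(0,0)$ (rate $\sqrt{\alpha}$) and $(1,0)$ (rate $\sqrt{1-\alpha}$).

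For the uniqueness and non-existence assertions — understood, as \eqref{main-eq} requires, for solutions with $v(-\infty)=1$, so that $v_l$ is on $\Gamma'_1$ — I would exhaust the remaining gluings. Condition \eqref{match-cond} at $x=-L$ forces $v_m(-L)=v_1(a)$, i.e.\ the arc from $-L$ to $A$ takes time $R(a)$; the possibilities for $v_r$ at $x=L$ are then: $\Gamma_1$, which forces $x_1=0$, $R(a)=L$ (giving $\mathcal{V}_b$); $\Gamma_2$ directly or over the peak, giving $\ell(a)=2L$ or $R(a)+r(a)=2L$ (giving $\mathcal{V}_s,\mathcal{V}_g$); a periodic orbit, which a computation with $h$ shows requires $\ell(a)<2L\le R(a)$ or $R(a)<2L<R(a)+r(a)$; $v_r\equiv\alpha$, which requires $L=R(\alpha)/2$; while $\Gamma'_1$-, $\Gamma^*_5$- or $\Gamma_3$-pieces on a half-line, and $v_r\equiv1$, violate $0<v\le1$ and are discarded. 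Because $R,r,\ell$ are strictly decreasing with $R(\theta)=\ell(\theta)=(R+r)(\theta)=2L^*$, $R(\alpha)>R(\theta)$, and $\ell>2L^*$ on $(0,\theta)$, every option except the symmetric one forces $L\ge L^*$; hence for $0<L<L^*$ only $\mathcal{V}_b$ survives, and for $L>L^*$ the only solutions with $v(-\infty)=1,\ v(+\infty)=0$ are $\mathcal{V}_s$ and $\mathcal{V}_g$. I expect this case bookkeeping — in particular excluding the periodic-tail and $v_r\equiv\alpha$ configurations — to be the main obstacle; the rescuing point is that each exotic gluing pins $L$ into a range lying above $L^*$, made visible by the monotonicity of $R,r,\ell$.

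Finally, the well-ordering \eqref{well-ordered} follows by comparison: $v\equiv1$ is a supersolution of \eqref{v-eq} ($f_l(1)=0$, $f_m(1)<0$), so monotone iteration from above produces the maximal bounded positive solution, necessarily $\mathcal{V}_b$; hence $\mathcal{V}_s,\mathcal{V}_g\le\mathcal{V}_b$, and the inequality is strict by the strong maximum principle since these are distinct solutions of the same equation. For $\mathcal{V}_s<\mathcal{V}_g$: on $(-\infty,-L]$ both lie on $\Gamma'_1$, and $\ell(a_s)=2L=(R+r)(a_g)$ together with $\ell<R+r$ on $(0,\theta)$ force $a_s<a_g$, so (since $v_1(\cdot)$ is increasing by \eqref{equ 2.5}) $\mathcal{V}_s(-L)=v_1(a_s)<v_1(a_g)=\mathcal{V}_g(-L)$, whence $\mathcal{V}_s<\mathcal{V}_g$ on $(-\infty,-L]$; since $\mathcal{V}_g-\mathcal{V}_s$ solves a linear second-order equation (coefficient $\equiv-1$ on $(-L,L)$) and tends to $0$ at $-\infty$ and to $1$ at $+\infty$, a Sturm/maximum-principle comparison propagates the strict inequality to all of $\R$.
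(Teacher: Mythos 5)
Your overall route is the same as the paper's: decompose a stationary solution into three trajectory pieces, translate the matching conditions into the width equations $R(a)=L$, $\ell(a)=2L$, $R(a)+r(a)=2L$, and use the monotonicity of $R,r,\ell$ from Lemma \ref{lem2.1} together with the limits at $a\to0^+$ and $a=\theta$ to get existence/uniqueness in each regime; your derivation of $a_s<a_g$ from $\ell(a_g)<2L=\ell(a_s)$ is exactly the paper's. Your one genuinely different sub-argument — obtaining $\mathcal{V}_s,\mathcal{V}_g<\mathcal{V}_b$ by monotone iteration from the supersolution $v\equiv1$ plus the strong maximum principle, instead of the paper's second trajectory comparison — is legitimate and arguably cleaner, provided you note that the maximal solution produced by the iteration is pinched between $\mathcal{V}_b$ and $1$ and hence has limits $1$ at $\pm\infty$, so the uniqueness part of (a) identifies it with $\mathcal{V}_b$.

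There is, however, a genuine gap in your last step. Having shown $\mathcal{V}_s<\mathcal{V}_g$ on $(-\infty,-L]$, you propagate the inequality to all of $\R$ by a ``Sturm/maximum-principle comparison'' on $w:=\mathcal{V}_g-\mathcal{V}_s$. First, the stated boundary behavior is wrong: $w\to0$ at \emph{both} ends (both functions tend to $0$ at $+\infty$), so there is no positive boundary value at $+\infty$ to anchor a maximum principle. Second, and more seriously, $w$ satisfies $w''+c(x)w=0$ with $c(x)=\bigl(f(x,\mathcal{V}_g)-f(x,\mathcal{V}_s)\bigr)/w=f_l'(\xi(x))$ on $|x|\ge L$, and $f_l'$ is \emph{positive} on an interval of $u$-values that $\mathcal{V}_g$ actually visits (it reaches $\theta$ at $x_2$), so $c$ is not sign-definite there and the maximum principle does not apply on $(L,\infty)$. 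The paper closes this by hand: on $[-L,L]$ it compares the two explicit profiles $a_s\cosh(x-\cdot)$ and $a_g\cosh(x-\cdot)$, showing a tangential crossing is incompatible with $a_s<a_g$; on each outer half-line it uses that both solutions are shifts of the same monotone piece ($\tilde V_1$ on the left, the ground state $V_2$ on the right) together with the endpoint ordering $\mathcal{V}_s(\pm L)<\mathcal{V}_g(\pm L)$, which comes from the fact that the orbit $\Gamma_0(a_s)$ lies to the left of $\Gamma_0(a_g)$. You have all the ingredients ($a_s<a_g$ and the explicit formulas), but the argument you actually wrote down would fail as stated and needs to be replaced by this pointwise/trajectory comparison. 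A smaller remark: in your exhaustion of gluings you should also record $R(a)\to0$ as $a\to1^-$, which is what makes $R(a)=L$ solvable for \emph{every} $L>0$ and not just $L\ge L^*$.
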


\begin{figure}[htbp]
\centering
\includegraphics[scale=0.45]{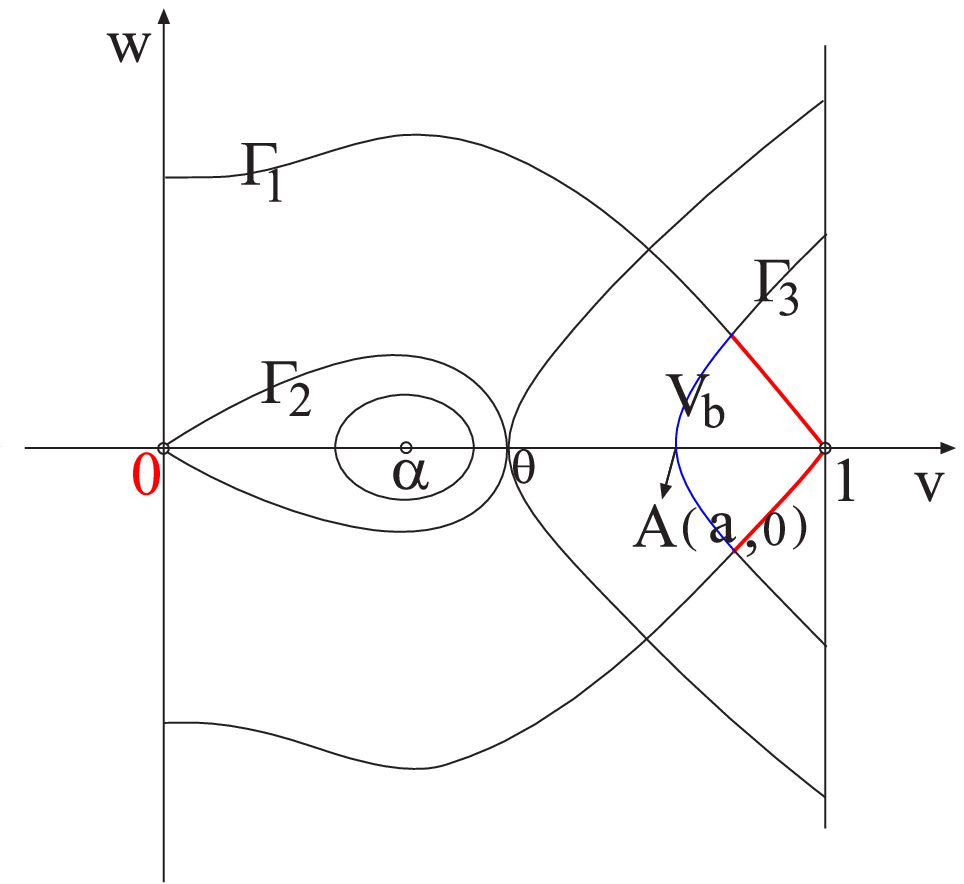}
\includegraphics[scale=0.45]{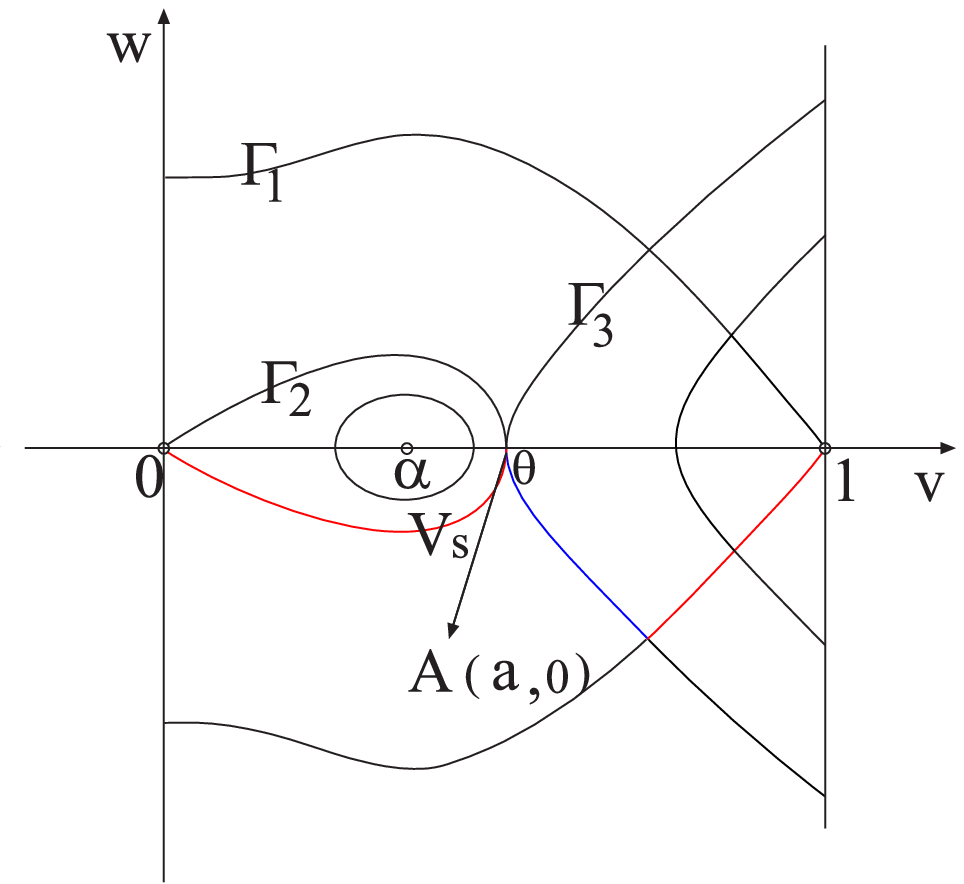}
\caption{\small
Trajectories of the solutions of \eqref{v-eq}: the case $L =L^* $. (a). Trajectory of $\mathcal{V}_b$; (b) Trajectory of $\mathcal{V}_s$.}
\end{figure}

\begin{figure}[H]
\centering
\includegraphics[scale=0.45]{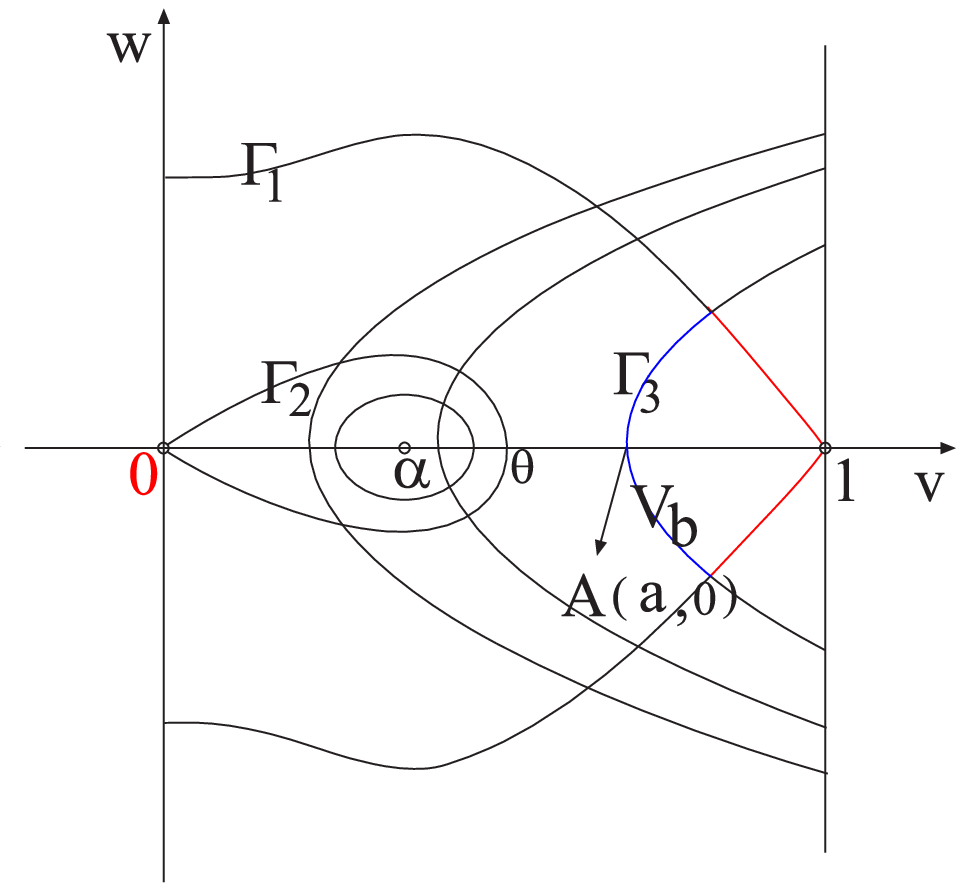}
\includegraphics[scale=0.45]{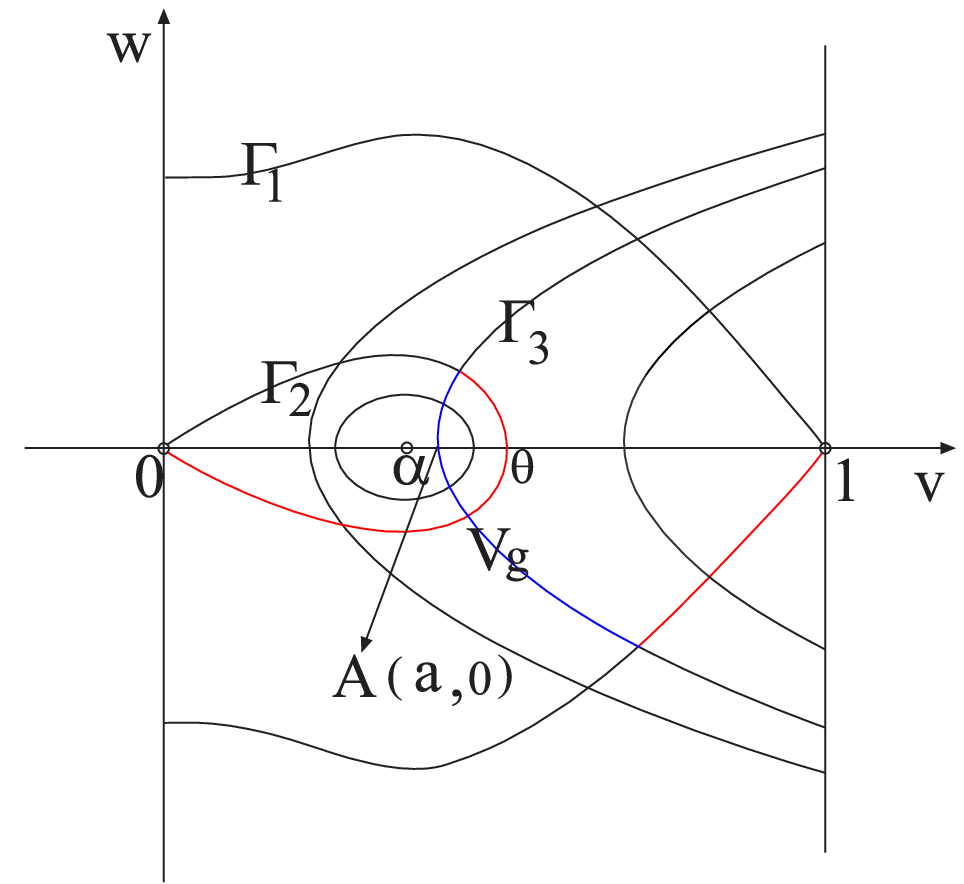}
\includegraphics[scale=0.45]{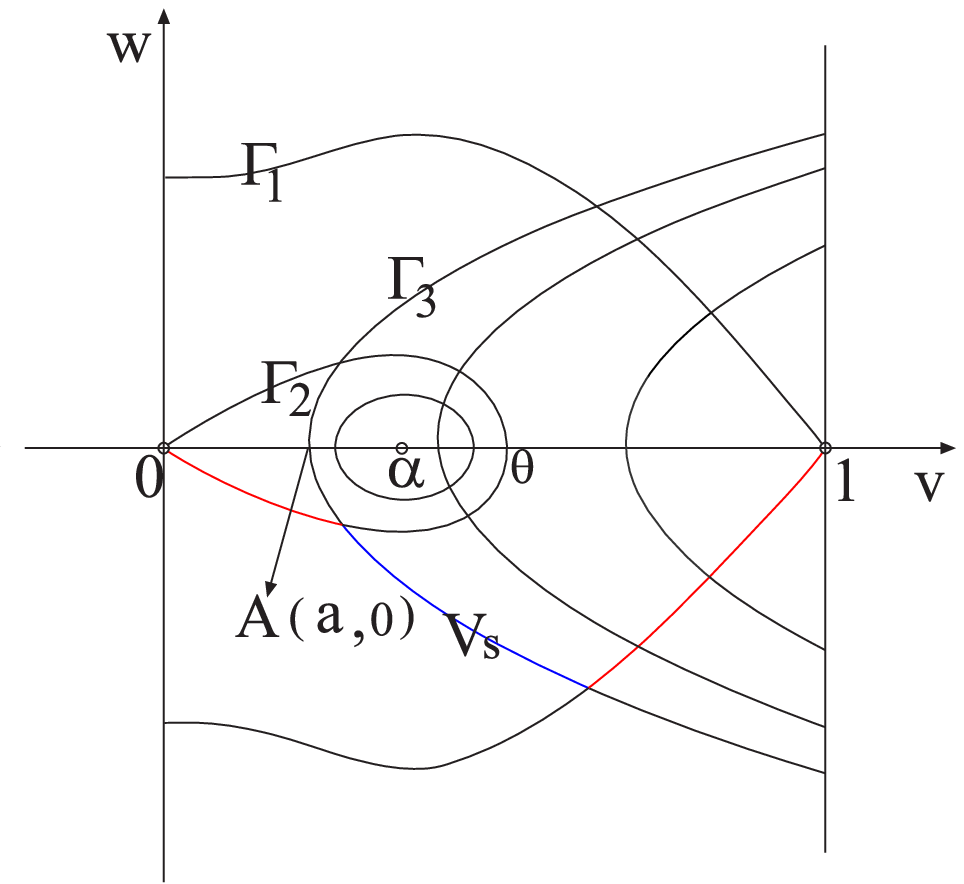}
\caption{\small Trajectories of the solutions of \eqref{v-eq}: the case $L >L^*$. (a) Trajectory of $\mathcal{V}_b$; (b) Trajectory of $\mathcal{V}_g$; (c) Trajectory of $\mathcal{V}_s$.}
\end{figure}

\begin{proof}
First, we construct solution $\mathcal{V}_b$ for any $L>0$. In Figure 3 (a), we see that the combination of the trajectory pieces: $(1,0)$-$B$-$A$-$C$-$(1,0)$ gives such a solution, as long as $X_{BAC}$ (which denotes the life span of the solution $V_0(x)$ given by the piece $BAC$) is $2L$.
From above we know that $X_{BAC}= 2R(a)$. Since
$$
R(a)\to 0\mbox{ as } a\to 1,\quad R(a)\to \infty \mbox{ as }a\to 0,
$$
and since $R(a)$ is strictly decreasing in $a$, there exists a unique $a\in (0,1)$ such that $2R(a)=2L$. For this choice of of $a$, the trajectory piece combination gives the solution $\mathcal{V}_b$.

Next, we construct $\mathcal{V}_s$ in case $L \geq L^*$. It is easily seen that the combination of trajectory pieces: $(1,0)$-$B$-$C$-$(0,0)$ in Figure 3 (b) or Figure 4 (c) gives a positive solution as $\mathcal{V}_s$, as long as $X_{BC}=2L$.  In case $L=L^*$, the unique suitable choice of $C$ is $(\theta,0)$ (see Figure 3 (b)). In case $L>L^*$, however, $X_{BC} = R(a)-r(a)=\ell(a)$, which is also strictly decreasing in $a$ with
$$
\ell(a) \to \infty \mbox{ as }a\to 0,\quad \ell(a)\to 2L^* \mbox{ as }a\to \theta.
$$
Hence, there exists a unique $a$ such that $\ell(a)=2L$. The corresponding combination of trajectory pieces defines the solution $\mathcal{V}_s$.

We then construct $\mathcal{V}_g$ in case $L>L^*$. As in Figure 4 (b), the combination of trajectory pieces: $(1,0)$-$B$-$C'$-$A$-$C$-$(\theta,0)$-$(0,0)$ gives a solution of type $\mathcal{V}_g$. Note that, for this purpose, we require that $X_{BAC} = R(a)+r(a)=2L$. From the analysis in the previous subsection this span is strictly decreasing in $a$. Hence there is a unique $a\in (0,\theta)$ satisfies this requirement. This prove the existence and uniqueness of $\mathcal{V}_g$.

The regularities of these solutions follow from the fact that they are bounded in the $W^2_{p, loc}(\R)$ topology, and smooth in the interior of $\R\backslash \{\pm L\}$ by the interior estimates in the theory of elliptic equations.
The decay rates in \eqref{decay-to-1-rate}, \eqref{V-s-property1}, \eqref{V-g-property1} can be shown by the corresponding solutions given by the trajectory pieces.
%For example, the linearized equation near $(1,0)$ is, with $q:= v-1$,
%$$
%q'' + f'_l (1) q = q'' -(1-\alpha) q= 0.
%$$
%Its two eigenvalues are $\pm \sqrt{1-\alpha}$, that is, the trajectory piece on $\Gamma'_1$ near $(1,0)$ is $w = v' \sim \sqrt{1-\alpha}\ (v-1)$, this gives the decay rate \eqref{decay-to-1-rate} as $x\to -\infty$.

Finally, we prove the ordering \eqref{well-ordered}.  We only consider the case $L>L^*$, since the proof for $\mathcal{V}_b >\mathcal{V}_s$ in case $L=L^*$ is similar.
In Figure 4 (a), we write the coordinates of the points $A,B,C$ as $(a_1, 0),\ (v^B_1, w^B_1),\ (v^C_1, w^C_1)$, respectively, and write the coordinates of $A,B,C$ in Figure 4 (b) and (c) similarly with subscripts $1$ replaced by $2, 3$, respectively.

We first prove $\mathcal{V}_g>\mathcal{V}_s$. Recall that, the trajectory piece $BC$ in Figure 4 (c) gives a solution $V_0(x;a_3) = a_3 \cosh x$, its shift $a_3 \cosh (x - \frac{r(a_3)+R(a_3)}{2})$  coincides with $\mathcal{V}_s (x)$ in $[-L,L]$, and $\ell(a_3)=R(a_3) - r(a_3) = 2L$.
Similarly, the trajectory piece $BC'AC$ in Figure 4 (b) gives the solution $V_0(x;a_2)= a_2 \cosh x$, its shift $a_2 \cosh (x - \frac{\ell(a_2)}{2})$ coincides with $\mathcal{V}_g(x)$ in $[-L,L]$, and $\ell(a_2) + 2 r(a_2) =2L$. Hence,
$$
\ell(a_2) = 2L - 2r(a_2) < 2L =\ell (a_3).
$$
By the monotonicity of $\ell$ we see that
$$
a_3 < a_2.
$$
Thus, the trajectory passing through $(a_3,0)$ lies on the left of that passing through $(a_2,0)$. This implies that $(v^B_3, w^B_3)$ lies on the left of $(v^B_2, w^B_2)$,
$(v^C_3, w^C_3)$ lies on the left of $(v^{C'}_2, w^{C'}_2)$. Hence,
$$
v^B_3 <v^B_2,\quad v^C_3 = v^{C'}_3 < v^{C'}_2 = v^C_2,
$$
that is,
\begin{equation}\label{v3<v2-L}
 a_3 \cosh \left(\pm L - \frac{r(a_3)+R(a_3)}{2} \right) < a_2 \cosh  \left(\pm L - \frac{\ell (a_2) }{2} \right).
\end{equation}
We now prove
\begin{equation}\label{v3<v2-whole}
 a_3 \cosh \left( x - \frac{r(a_3)+R(a_3)}{2} \right) <  a_2 \cosh \left( x - \frac{\ell (a_2)}{2}\right),\quad x\in [-L,L].
\end{equation}
By contradiction we assume this is not always true. Then combining with \eqref{v3<v2-L} we see that, there exists $y \in (-L,L)$ such that
$$
 a_3 \cosh \left( y - \frac{r(a_3)+R(a_3)}{2} \right) =  a_2 \cosh \left( y - \frac{\ell (a_2)}{2}\right),
$$
and the derivatives of them at this point $y$ satisfy
$$
 a_3 \sinh \left( y - \frac{r(a_3)+R(a_3)}{2} \right)  \leq  a_2 \sinh \left( y - \frac{\ell (a_2)}{2}\right).
$$
This is impossible due to $a_3<a_2$. This proves $\mathcal{V}_s<\mathcal{V}_g$ in $[-L,L]$.

On the interval $(-\infty,-L]$, we have
$$
\mathcal{V}_s(x) = \tilde{V}_1 (x-x_3),\quad \mathcal{V}_g(x) = \tilde{V}_1 (x-x_2)
$$
for some suitable $x_3, x_2$, where $\tilde{V}_1(x)$ is the solution given by $\Gamma'_1$. By $v^B_3 < v^B_2$ we have
$$
 \tilde{V}_1 (-L-x_3)=\mathcal{V}_s(-L) = v^B_3 < v^B_2 = \mathcal{V}_g(-L) = \tilde{V}_1 (-L-x_2) .
$$
Hence $x_3 < x_2$ and so
$$
\mathcal{V}_s(x) = \tilde{V}_1 (x-x_3) <  \mathcal{V}_g(x) = \tilde{V}_1 (x-x_2),\quad x\leq -L.
$$
In a similar way one can show that $\mathcal{V}_s <\mathcal{V}_g$ in $[L,\infty)$. In summary we obtain the the first inequality in \eqref{well-ordered}.

The second inequality of \eqref{well-ordered} is proved similarly.
\end{proof}

\begin{remark}\label{rem:general-bi-ss1}
\rm
We constructed three positive stationary solutions $\mathcal{V}_b, \mathcal{V}_g, \mathcal{V}_s$ in the previous proposition. For convenience, we call them the {\bf big, transition and small stationary solutions}, respectively, and denote
\begin{equation}\label{def-omega-limit-set}
\mathcal{S} := \{\mathcal{V}_s, \mathcal{V}_g, \mathcal{V}_b\}.
\end{equation}
In the next section we will show that any $\omega$-limit of the solution of \eqref{main-eq} with initial data satisfying \eqref{ini-cond-2} is an element in $\mathcal{S}$, even though \eqref{main-eq} has other positive stationary solutions. For example, the reflections of the solutions in $\mathcal{S}$ with respect to $x=0$, and many  other types of solutions, positive in the whole $\R$, or compactly supported.
\end{remark}

Besides the positive stationary solutions mentioned above, we can construct some other upper and lower solutions which can be used for comparison. Here we give one of them.
In Figure 4 (b), we see that the combination of the trajectory pieces $F$-$D$-$B^*$-$E$-$\Gamma_4$ defines a function $V(x)$. More precisely, denote $D= (v^D, w^D),\ E=(v^E, w^E)$. Recall that we denote the solution corresponding to trajectory $\Gamma^*_5$ by $V^*_5(x)$ in the previous subsection. Denote the solution corresponding to the combination $D$-$B^*$-$E$ by $V^*_0(x)$, and the periodic solution corresponding to the $\Gamma_4$ by $V^{per}(x)$. Combining them together we obtain the function $V$:
$$
  V(x) =
  \left\{
  \begin{array}{ll}
  V^*_5(x+x_1), & x\leq -L,\\
  V^*_0(x+x_2), & x\in [-L,L],\\
  V^{per}(x+x_3), & x\geq L,
  \end{array}
  \right.
$$
where the shifts $x_1,x_2,x_3$ are chosen to satisfy the following matching conditions:
$$
 V^*_5 (-L+x_1)=v^D = V^*_0(-L+x_2),\quad  V^*_0(L+x_2)= v^E =V^{per} (L+x_3).
$$
Moreover, for any small $\varepsilon_0>0$, there exists $-\overline{L}< -L$ such that
$V(x)= V^*_5 (x + x_1) \geq 1+\varepsilon_0$ for $x\leq -\overline{L}$. Now we define
\begin{equation}\label{def-overline-V}
\overline{V}(x) :=
 \left\{
 \begin{array}{ll}
 1+ \varepsilon_0, & x\leq -\overline{L},\\
 V^*_5(x+x_1), &  x\in [-\overline{L}, -L],\\
 V^*_0(x+x_2), & x\in [-L, L],\\
 V^{per}(x+x_3), & x\geq L.
 \end{array}
 \right.
\end{equation}
Note that this function is continuous in $\R$ and $C^1$ in $\R\backslash \{-\overline{L}\}$. In addition,
$$
\overline{V}'(-\overline{L}-0)=0 > \overline{V}'(-\overline{L}+0),
$$
and
$$
\overline{V}(x) \geq \varepsilon_1 := \min\left\{ V^*_0 (0),\ \min\limits_{x\in \R} V^{per}(x)\right\} >0,\quad x\in \R.
$$
In the Subsection \ref{subsec:1.3} we will use this upper solution to separate $\mathcal{V}_s$ from $\mathcal{V}_g$ to give the trichotomy results.

\begin{remark}\label{rem:general-bi-ss2}\rm
As we mentioned in Section 1, one may be interested in general reaction terms, to say, $f$ is a bistable nonlinearity outside of the unfavorable region and a general negative growth rate in the region. In this case, the phase plane analysis we do in this section remains valid, but with more complicated situations. Now we give a sketch below. (1). We collect all possible trajectory pieces $BC$ as in Figure 3(c) and 4(c), and define
$$
L_* := \min \{X_{BC} \mid B\in \Gamma'_1,\ C \mbox{ lies on the lower half of } \Gamma_2\};
$$
then collect all possible trajectory pieces $BC'C$ as in Figure 4(b), and define
$$
L^* := \min \{X_{BC'C} \mid B \in \Gamma'_1,\ C \mbox{ lies on the upper half of } \Gamma_2\}.
$$
Then $L^*\geq L_* >0$. (Note that, in our current special case $L_* = L^* = X_{BC}$ for $C=(\theta,0)$.) (2). If we consider only positive stationary solutions of \eqref{v-eq} satisfying $v(-\infty)=1,\ v(\infty)=0$, then we can show the following results in a similar way as we do in this section: when $L<L_*$, \eqref{v-eq} has
only $\mathcal{V}_b$ type of solutions, which may be not unique but well-ordered; when $L\in [L_*, L^*)$ (or $L=L^*$ when $L_*=L^*$), \eqref{v-eq} has two groups of solutions, one group is of the form $\mathcal{V}_s$, the other is of the form $\mathcal{V}_b$. Each group may be not unique but well-ordered;
when $L\geq L^*$ (or $L>L^*$ when $L_*=L^*$), \eqref{v-eq} has three groups of solutions, they are of the form $\mathcal{V}_s, \mathcal{V}_g, \mathcal{V}_b$, respectively. Each group may be not unique but well-ordered. (3). Though we can derive the order in each group, it is not easy to prove all the solutions are well-ordered. Even, we can not easily to show,
by just analyzing the ODE \eqref{v-eq}, the existence of the smallest and largest stationary solution. However, it is remarkable to point out that, this last conclusion can be proved by a PDE approach.
More precisely, we consider an initial data $u_0(x)$ which approaches $1$ as $x\to -\infty$ in a sufficiently slow decay rate, and it is decreasing and takes $0$ at some point. If we shift this initial data leftward sufficiently far such that it is smaller than any stationary solution, then, by the general convergence result (which will be proved in the next section) the solution with this initial data converges to a positive stationary solution, which must be the smallest one.
On the other hand, the solution with initial data $u_0 \equiv 1$ must converges to the largest stationary solution.
\end{remark}

\section{General Convergence Result}

In this section we use the so-called zero number argument to prove that, for any solution $u$ of \eqref{main-eq}, its $\omega$-limit set is contained in $\mathcal{S}$. We first recall the typical zero number diminishing properties. Consider
\begin{equation}\label{linear}
\eta_t=a(x,t) \eta_{xx}+b(x,t) \eta_x+c(x,t) \eta\quad \mbox{ in } E_0 := \{(x,t) \mid x\in \R,\ t\in (t_1, t_2) \},
\end{equation}
where $t_2 > t_1\geq 0$. For each $t\in (t_1, t_2)$, denote by
$$
\mathcal{Z}(t) := \sharp \{x\in \R\mid \eta(\cdot ,t)=0\}
$$
the number of zeroes of $\eta (\cdot,t)$ in $\R$. A point $x_0$ is called a {\it multiple zero} (or {\it degenerate zero}) of $\eta (\cdot, t)$ if $\eta (x_0,t) = \eta_x (x_0,t)=0$. In 1988, Angenent \cite{A} proved a zero number diminishing property, and in 1998, the conditions Angenent had used in \cite{A} were weaken by Chen \cite{ChXY} for strong solutions in $W^{2,1}_{p,\; loc}(\R\times (0,\infty))$. One of their results is summarized as the following:

\begin{lem}[\cite{A, ChXY}]\label{lem:zero}
Assume the coefficients in \eqref{linear} satisfies
\begin{equation}\label{smoothy}
a, a^{-1}, a_t, a_x, b, c\in L^\infty.
\end{equation}
Let $\eta$ be a nontrivial $W^{2,1}_{p,loc}$ solution of \eqref{linear}. Then
\begin{itemize}
\item[(1)] the zeros of $\eta(\cdot,t)$ are isolated;
\item[(2)] if $\mathcal{Z}(t)<\infty$ for some $t_0\in (t_1, t_2)$, then it is decreasing in $t\in (t_0, t_2)$. Moreover, if $s\in (t_0, t_2)$ and $x_0$ is a multiple zero of $\eta(\cdot,s)$, then $\mathcal{Z} (s_1) > \mathcal{Z} (s_2)$ for all $s_1, s_2$ satisfying $t_0 < s_1< s< s_2 <t_2$.
\end{itemize}
\end{lem}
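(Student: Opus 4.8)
This is the classical Sturm-type ``zero number'' theorem for one-dimensional linear parabolic equations, so the most economical route is to cite it directly: the statement for H\"older-continuous coefficients is due to Angenent \cite{A}, and the extension to coefficients merely satisfying \eqref{smoothy} and to strong $W^{2,1}_{p,loc}$ solutions is the content of \cite{ChXY}. If one wishes to reproduce the argument, here is the route I would take.

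First I would normalize. The zero set of $\eta$ is unchanged when $\eta$ is multiplied by a strictly positive factor, and such a factor can be chosen to solve an auxiliary linear parabolic equation cancelling the zeroth-order term, so one may assume $c\equiv 0$ (in any case $c\eta$ and $b\eta_x$ are lower order in the parabolic rescaling used below). To study $\eta$ near a fixed point $(x_0,s)$ one localizes to a small space-time box, on which the coefficients are uniformly close to the frozen values $a(x_0,s)>0$, $b(x_0,s)$; an affine change of the space variable then normalizes $x_0=0$, $a(x_0,s)=1$, $b(x_0,s)=0$.

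The core step --- and the part I expect to be the main obstacle --- is the local description of the nodal set near $(0,s)$. I would show that a nontrivial solution with $\eta(0,s)=0$ has a finite \emph{vanishing order} $n\ge 1$ there, in the sense that, in the parabolic similarity variable $\xi=x/\sqrt{s-t}$ for $t<s$, the rescaled and renormalized functions $\eta(\sqrt{s-t}\,\xi,t)$ converge as $t\uparrow s$ to a nonzero multiple of the degree-$n$ polynomial $q_n$ solving $q''-\tfrac{\xi}{2}q'+\tfrac{n}{2}q=0$ (the $n$-th Hermite-type eigenfunction of the Ornstein--Uhlenbeck operator $q\mapsto q''-\tfrac{\xi}{2}q'$). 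This is where the real work lies: one uses interior parabolic estimates (which under \eqref{smoothy} give $W^{2,1}_{p,loc}$, hence $C^{1+\gamma}$, bounds) to obtain precompactness of the rescaled family, together with a spectral decomposition showing that a single eigenmode dominates along $t\uparrow s$; finiteness of $n$ rests on unique continuation / backward uniqueness for \eqref{linear}, namely that a nontrivial solution cannot vanish to infinite order at an interior point.

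From this local picture I would deduce the conclusions as follows. Since $q_n$ has exactly $n$ distinct real roots, at $t=s$ one gets $\eta(x,s)\sim c\,x^{\,n}$ near $0$, so $0$ is an isolated zero; as $(0,s)$ was arbitrary this is (1). Moreover, for $t$ slightly below $s$ the solution has exactly $n$ zeros near $0$, while for $t$ slightly above $s$ it has strictly fewer sign changes near $0$ whenever $0$ is a multiple zero (that is, $n\ge 2$), the count in fact dropping by at least two; and away from the finitely many zeros of $\eta(\cdot,s)$ the implicit function theorem at the simple zeros shows the zero count is locally constant. Summing these local contributions gives that $\mathcal{Z}$ is non-increasing near any time where it is finite and drops strictly across any time carrying a multiple zero; iterating yields monotonicity on all of $(t_0,t_2)$, together with $\mathcal{Z}(s_1)>\mathcal{Z}(s_2)$ in (2). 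The remaining technical point, specific to the unbounded domain $x\in\R$, is to prevent zeros from entering at spatial infinity: this is exactly why one assumes $\mathcal{Z}(t_0)<\infty$, so that $\eta(\cdot,t_0)$ has a fixed sign outside a large ball, a sign preserved for nearby $t$ by interior estimates. Since all of this is carried out in \cite{A,ChXY}, in the present paper we simply invoke those references.
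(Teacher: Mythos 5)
The paper offers no proof of this lemma at all---it is quoted verbatim as a known result of Angenent and Chen---so your primary move, simply invoking \cite{A, ChXY}, is exactly what the paper does. Your accompanying sketch (parabolic blow-up at a zero, finite vanishing order detected by the Hermite-type eigenfunctions of the Ornstein--Uhlenbeck operator, local counting of zeros before and after a degenerate time, plus the finiteness hypothesis $\mathcal{Z}(t_0)<\infty$ to control zeros near spatial infinity) is a faithful outline of Chen's argument, so nothing needs correcting.
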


Using this lemma we can prove the following convergence result.

\begin{thm}\label{thm:conv}
Assume {\rm (H)}, $u_0$ satisfies   \eqref{ini-cond-2}. Then for any $L>0$, the unique time-global solution $u(x,t)$ converges as $t\to \infty$ to an element in $\mathcal{S}$, in the topology of $L^\infty_{loc}(\R)$.
\end{thm}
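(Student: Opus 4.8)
The plan is to use the zero-number argument in the by-now standard way (as in Du--Matano \cite{DM} or Du--Lou \cite{DL}), adapted to our spatially inhomogeneous $f(x,u)$. First I would establish the basic a priori properties of the solution $u(x,t)$: by the comparison principle, $0\le u\le \max\{1,\|u_0\|_\infty\}$, so the solution is global and, by parabolic estimates, its orbit is precompact in $L^\infty_{loc}(\R)$; hence the $\omega$-limit set $\omega(u)$ is nonempty, compact and connected in $L^\infty_{loc}(\R)$, and consists of entire solutions. Moreover, since $u_0$ is (weakly) decreasing, the spatial derivative $u_x$ satisfies a linear equation of the form \eqref{linear} with coefficients $a\equiv 1$, $b\equiv 0$, $c=f_u(x,u)$ satisfying \eqref{smoothy} away from $x=\pm L$; combined with the strong maximum principle this gives $u_x(\cdot,t)\le 0$ for $t>0$, so every $\omega$-limit function $\varphi$ is nonincreasing in $x$. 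Also, by comparison with the spatially homogeneous bistable/linear problems on the half-lines (and the Fife--McLeod type stability of the bistable wave, or just the sub/supersolutions built from the trajectories $\Gamma_1,\Gamma_2$ in Section 2), I would pin down the limits $\varphi(-\infty)\in\{0,\alpha,1\}$ and $\varphi(+\infty)\in\{0,\alpha\}$; the hair-trigger-type argument near $x=-\infty$ (using $u_0(-\infty)=1$ and the decay hypothesis) rules out $\varphi(-\infty)=\alpha$ and forces $\varphi(-\infty)=1$. So each $\varphi\in\omega(u)$ is a nonincreasing entire solution with $\varphi(-\infty)=1$, $\varphi(+\infty)\in\{0,\alpha\}$, and I would further argue $\varphi(+\infty)\ne\alpha$ (again by a zero-number/instability argument against the constant $\alpha$), leaving $\varphi(+\infty)=0$ unless $\varphi\equiv\mathcal{V}_b$.

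Next I would run the zero-number comparison against the family of stationary solutions. Fix any $\psi\in\mathcal{S}$ (or more generally any of the ODE stationary solutions of \eqref{v-eq}), and apply Lemma \ref{lem:zero} to $\eta(x,t):=u(x,t)-\psi(x)$, which solves a linear equation of the form \eqref{linear} with coefficients satisfying \eqref{smoothy} on each of the three regions $x<-L$, $|x|<L$, $x>L$; the only subtlety is the two interface lines $x=\pm L$, where $f(x,\cdot)$ is discontinuous. Here I would invoke Chen's version \cite{ChXY} of the zero-number principle for $W^{2,1}_{p,loc}$ strong solutions, which tolerates exactly this: $u(\cdot,t)$ and $\psi$ are $C^1$ across $x=\pm L$, the equation holds in the strong $W^{2,1}_{p,loc}$ sense, and the argument of \cite{ChXY} (localizing and using the $C^1$ matching) still yields that $\mathcal{Z}(t)=\sharp\{x:u(x,t)=\psi(x)\}$ is finite for $t>0$, nonincreasing in $t$, and strictly drops at any degenerate zero. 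The decay hypotheses in \eqref{ini-cond-2} — the ``$0$ or $\infty$'' dichotomy of the ratios $|1-u_0|/e^{\sqrt{1-\alpha}\,x}$ and $u_0/e^{-\sqrt{\alpha}\,x}$ against the sharp decay rates of the stationary solutions in \eqref{decay-to-1-rate}, \eqref{V-s-property1}, \eqref{V-g-property1} — are precisely what guarantee $\mathcal{Z}(0^+)<\infty$ (no zero accumulating at $\pm\infty$) for every relevant $\psi$.

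Finally I would deduce convergence. Since $\mathcal{Z}(t)$ is a nonincreasing integer, it is eventually constant, say equal to $N_\psi$ for large $t$, with no degenerate zeros for large $t$; by the usual argument this implies $\sharp\{x:\varphi(x)=\psi(x)\}=N_\psi$ with all intersections transverse, simultaneously for \emph{every} $\varphi\in\omega(u)$ — i.e. the intersection number with each stationary solution $\psi$ is the same for all elements of $\omega(u)$. Now take $\psi$ ranging over $\mathcal{V}_s,\mathcal{V}_g,\mathcal{V}_b$: a nonincreasing entire solution $\varphi$ with $\varphi(-\infty)=1$, $\varphi(+\infty)\in\{0\}$ (the case $\varphi(+\infty)=\alpha$ giving $\varphi\equiv\mathcal{V}_b$ directly) that has a fixed, finite, transverse intersection count with each of the three monotone-type steady states must itself be a steady state: otherwise one produces, by sliding $\varphi$ or comparing two elements $\varphi_1,\varphi_2\in\omega(u)$ via $\varphi_1-\varphi_2$ (again zero-number on \eqref{linear}, using connectedness of $\omega(u)$ to force $\varphi_1\equiv\varphi_2$), a contradiction with the count; and the only monotone-type steady states of \eqref{v-eq} with these boundary values, by Proposition \ref{prop:ss}, are exactly the members of $\mathcal{S}$ (note $\mathcal{S}$ already contains $\mathcal{V}_b$, so the monotone-increasing-on-the-right profile is covered). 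Hence $\omega(u)=\{\varphi\}$ is a single element of $\mathcal{S}$, which is the claimed convergence.

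The main obstacle I anticipate is the careful justification of the zero-number machinery across the two discontinuity interfaces $x=\pm L$: verifying that $u(\cdot,t)-\psi$ has only isolated, non-accumulating zeros including near $x=\pm L$, that the ``strict drop at a degenerate zero'' still holds when the degenerate zero happens to sit at $x=\pm L$, and that $\mathcal{Z}(0^+)<\infty$ really does follow from \eqref{ini-cond-2}. The rest — precompactness, monotonicity in $x$, identification of the boundary limits, and the extraction of a single limit from a constant transverse intersection number — is routine and parallels \cite{DM,DL,Zla}.
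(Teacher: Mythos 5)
Your proposal breaks down at its first structural step: the claim that $u_x(\cdot,t)\le 0$ is preserved. That argument requires $u_x$ to satisfy a homogeneous linear parabolic equation, which it does only when $f$ is independent of $x$; here $f(x,u)$ jumps at $x=\pm L$, so differentiating in $x$ produces a singular forcing term and monotonicity is \emph{not} preserved. This is not a technicality: the limit states $\mathcal{V}_b$ and $\mathcal{V}_g$ are non-monotone ($\mathcal{V}_b$ is even with $\mathcal{V}_b(\pm\infty)=1$, and $\mathcal{V}_g$ has an interior local minimum and a local maximum at some $x_2>L$), and the paper explicitly notes in the proof of Theorem \ref{thm1.3} that solutions need not be decreasing on $[L,\infty)$. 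For the same reason your restriction $\varphi(+\infty)\in\{0,\alpha\}$ is wrong: it would exclude the spreading limit $\mathcal{V}_b$ and hence contradict Theorem \ref{thm1.1}.

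The second gap is in the final deduction. Knowing that $u(\cdot,t)$ has an eventually constant, transverse intersection number with each of the three elements of $\mathcal{S}$ does not force an $\omega$-limit to be a stationary solution; one must test against a sufficiently rich family. The paper's actual mechanism is: for each $\omega$-limit $w$, solve the stationary ODE \eqref{construct-ss} with Cauchy data $v(0)=w(0,0)$, $v'(0)=w_x(0,0)$, so that $w(\cdot,0)-v$ has a \emph{degenerate} zero at $x=0$; since the zero number of $u-v$ is eventually finite, nonincreasing, and eventually free of degenerate zeros, the limit $w(\cdot,0)-v$ must be identically zero. This forces $w$ to be one of the bounded phase-plane solutions $v_1,\dots,v_{10}$, and the paper then excludes all types other than $\mathcal{V}_s,\mathcal{V}_g,\mathcal{V}_b$: types vanishing at a finite right endpoint fail to be $C^1$ there (while $\omega$-limits are $C^{1+\nu}$), types not approaching $1$ near $-\infty$ are excluded by comparison with the compactly supported stationary solutions $V_a$ placed far to the left, and the type with a periodic tail is excluded by counting zeros against a nearby periodic-tail solution $\tilde v_9$, which would force infinitely many intersections. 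None of these steps appears in your sketch, and your "sliding" alternative is not a substitute. By contrast, your worry about the interfaces $x=\pm L$ is a non-issue: the coefficient $c(x,t)=\bigl(f(x,u)-f(x,v)\bigr)/(u-v)$ only needs to be $L^\infty$, and the discontinuity of $f$ in the variable $x$ does not affect that, so Chen's $W^{2,1}_{p,loc}$ version of the zero-number lemma applies directly.
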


\begin{proof}
First, one can derive the boundedness of $u$ easily by the maximum principle. Then the existence and uniqueness of the strong solution in $W^{2,1}_{p, loc}(\R\times (0,\infty))$ follows from the standard parabolic theory.
By the standard $L^p$ estimates, for any increasing time sequence $\{t_n\}$, any $p>1,\ M>0$, there exists $C=C(M,p)$ such that
$$
\|u(x,t +t_n)\|_{W^{2,1}_p ([-M,M]^2)} \leq C,
$$
therefore, for any $\nu\in \left(0,1-\frac1p \right)$, there exists a subsequence of $\{t_n\}$, denoted it again by $\{t_n\}$, and a function $w(x,t)\in C^{1+\nu, \frac{1+\nu}{2}}([-M,M]^2)$ such that
$$
\| u(x,t+t_n)- w(x,t)\|_{C^{1+\nu, \frac{1+\nu}{2}}([-M,M]^2)} \to 0 \mbox{\ \ as\ \ }n\to \infty.
$$
By Cantor's diagonal argument, the $\omega$-limit set of $u(\cdot,t)$ in the topology of $C^{1+\nu}_{loc}(\R)$ is not empty, and by standard dynamics theory, compact, invariant and connected.

We divide the rest proof into several steps.

\medskip
{\it Step 1. A quasi-convergence result: any $\omega$-limit of $u$ is a stationary solution}. We follow the idea in \cite{DL,DM} and use the zero number argument.
We show that, for any $t\in \R$, $w(\cdot,t)$ is actually a stationary solution. We only need to prove $w(\cdot,0)$ is so. By contradiction, we assume that $w(x,0)$ is not a stationary solution, so $w(x,0)\not\equiv 0$. Assume without loss of generality that $w(0,0)>0$. Using $w(x,0)$ we construct a real stationary solution:
\begin{equation}\label{construct-ss}
 \left\{
  \begin{array}{l}
  v''+f(x,v)=0,\ \quad x\in \R,\\
  v(0)= w(0,0)>0,\ \ v'(0)=w_x(0,0).
  \end{array}
  \right.
\end{equation}
By the phase plane analysis in the previous section we see that, the bounded $W^2_p$ solution of \eqref{construct-ss} must be less than $1$ and positive in some open interval $(l_0,r_0)$ with $-l_0, r_0\in (0,\infty]$. In addition, it has the following cases:
\begin{enumerate}[(1).]
\item $v=v_1$ satisfies $v_1(l_0)=v_2(r_0) =0$ for $l_0<0<r_0$;
\item $v=v_2$ satisfies $v_2(l_0)=0$ for $l_0<0$, $v_2(x)>0$ for $x>l_0$ and $v_2$ satisfies \eqref{limit-to-1} as $x\to \infty$;

\item $v=v_3$ satisfies $v_3(l_0)=0$ for $l_0<0$, $v_3(x)>0$ for $x>l_0$ and $v_3$ satisfies \eqref{limit-to-per} for $x>L$;

\item $v=v_4$ satisfies $v_4(l_0)=0$ for $l_0<0$, $v_4(x)>0$ for $x>l_0$ and $v_4$ satisfies \eqref{limit-to-0} as $x\to \infty$;

\item $v=v_5$ satisfies $v_5(r_0)=0$ for $r_0>0$, $v_5(x)>0$ for $x<r_0$ and $v_5$ satisfies \eqref{limit-to-1} as $x\to -\infty$;

\item $v=v_6$ satisfies $v_6(r_0)=0$ for $r_0>0$, $v_6(x)>0$ for $x<r_0$ and $v_6$ satisfies \eqref{limit-to-per} for $x<-L$;

\item $v=v_7$ satisfies $v_7(r_0)=0$ for $r_0>0$, $v_7(x)>0$ for $x<r_0$ and $v_7$ satisfies \eqref{limit-to-0} as $x\to -\infty$;

\item $v=v_8$ satisfies $v_8(x)>0$ for all $x\in \R$. Moreover, it satisfies \eqref{limit-to-1} as $x\to -\infty$; and satisfies \eqref{limit-to-1} or \eqref{limit-to-0} as $x\to \infty$. Note that such solutions ar nothing but $\mathcal{V}_b, \mathcal{V}_g, \mathcal{V}_s$ constructed in the previous section, and each of them is unique if it exists;

\item $v=v_9$ satisfies $v_9(x)>0$ for all $x\in \R$. Moreover, it satisfies \eqref{limit-to-1} as $x\to -\infty$; and satisfies \eqref{limit-to-per} for $x>L$.
    (Note that $V^{per}$ we constructed in the previous section belongs to this type, and only exist in case $L>L^*$);

\item $v=v_{10}$ satisfies $v_{10}(x)>0$ for all $x\in \R$. Moreover, it satisfies  \eqref{limit-to-per} for $x<-L$, or satisfies \eqref{limit-to-0} as $x\to -\infty$;
    and satisfies one of the following decay rates on the right side:
\end{enumerate}
\begin{equation}\label{limit-to-1}
0<1-v(x) \sim e^{-\sqrt{1-\alpha}|x|};
\end{equation}
\begin{equation}\label{limit-to-per}
v(x)=V^{per}(x) \mbox{ for some positive periodic solution } V^{per} \mbox{ with } 0<V^{per}(x)<\theta;
\end{equation}
\begin{equation}\label{limit-to-0}
0<v(x) \sim e^{-\sqrt{\alpha}|x|}.
\end{equation}

For any of the above stationary solution $v_i$, we set $\eta(x,t):= u(x,t)-v_i(x)$ for $x\in (l_0,r_0),\ t\geq 0$. Then $\eta$ satisfies
$$
\eta_t = \eta_{xx} +c(x,t)\eta,\quad (l_0,r_0),\ t>0,
$$
where
$$
c(x,t):= \left\{
 \begin{array}{ll}
 \displaystyle \frac{f(x,u)-f(x,v_i)}{\eta(x,t)},& \mbox{when } \eta(x,t)\not=0,\\
 0, & \mbox{when } \eta(x,t)=0
 \end{array}
 \right.
$$
is bounded. Hence, the zero number diminishing property Lemma \ref{lem:zero} is applicable. On the other hand, the decay rates to $1$ and/or to $0$ for $u_0$ in \eqref{ini-cond-2} is imposed in order to separate $u_0$ with any one of the stationary solutions: for any choice of $i\in \{1,2,\cdots,10\}$, the function $\eta$ satisfies
$$
\eta(l_0,t_1)\not= 0, \quad \eta(r_0,t_1)\not=0,
$$
for any sufficiently small $t_1>0$. Moreover, when $l_0=-\infty$ we have $\eta(x,t_1)\not= 0$ for all $x\ll -1$, and
when $r_0=\infty$ we have $\eta(x,t_1)\not= 0$ for all $x\gg 1$. Therefore, the
zero number property implies that the number of zeros, denoted by $\mathcal{Z}(t)$, of $\eta(\cdot,t)$ in $(l_0,r_0)$ is finite for all $t>t_1$: $\mathcal{Z}(t)<\infty$.
In addition, it is decreasing in $t$ and strictly decreasing in $t$ when it passes a moment when there is degenerate zeros. Consequently, after some time, to say, when $t\geq T$ for some large $T$, $\eta(\cdot,t)$ not longer has degenerate zeros. As it was shown in \cite[Lemma 2.6]{DM}, this implies that any $\omega$-limit of $\eta(\cdot,t)$ either is identical $0$, or has only simple zeros. Especially, the limit $w(x,0)-v_i(x)$ of $\eta(x,t_n)$ is identical zero, this is what we desired, or it has only simple zeros. The latter, however, contradicts the initial conditions in \eqref{construct-ss}.
This proves the quasi-convergence in Step 1.

\medskip
{\it Step 2. To show $\omega(u)\subset \mathcal{S}$.}
By our assumption \eqref{ini-cond-2} we see that for any $a\in (\theta,1)$ and any $x_1\ll -1$,
$$
u_0(x) > V_a (x-x_1),\quad x\in [x_1 -x_0, x_1 +x_0],
$$
for the compactly supported stationary solution $V_a (x)$ given in \eqref{ss-bistable}. By comparison we have $u(x,t)\geq V_a (x-x_1)$ for all $t>0$. So the possible choices of the $\omega$-limits of $u$ are as $v_5, v_6, v_7, v_8,v_9$. On the other hand, $u$ is bounded in the topology of $W^{2,1}_{p, loc}(\R\times (0,\infty))$ and so the $\omega$-limits can be taken in the topology of $C^{1+\nu, \frac{1+\nu}{2}}_{loc}(\R\times (0,\infty))$ for any $\nu \in (0,1)$. This implies that any $\omega$-limit must be a $C^{1+\nu}(\R)$ function. Hence, $v_5,v_6$ and $v_7$ are excluded from the candidates of the $\omega$-limits since they are not $C^1$ function at $x=r_0$.

We finally exclude the possibility of $v_9$. By contradiction, assume $v_9(x)$ is an $\omega$-limit of $u$. As we mentioned in the previous section, once a solution of $v_9$ type exists, there must be a small family of such type of solutions. We choose one of them, to say, $\tilde{v}_9$ such that
$$
\max v_9 \not= \max \tilde{v}_9,\quad \min v_9 \not= \min \tilde{v}_9.
$$
Now we consider the number of zeros $\mathcal{Z}_9 (t)$ of $\eta_9 (x,t):= u(x,t)- \tilde{v}_9 (x)$. On the one hand, as before we know by the assumption for $u_0$ that $\mathcal{Z}_9(t)<\infty$ and is decreasing in $t$. On the other hand, the assumption $v_9$ is an $\omega$-limit of $u$
implies that $\mathcal{Z}_9(t)$ tends to the number of the zeros of $v_9(x)-\tilde{v}_9(x)$, which is infinite, a contradiction.
Therefore, any $\omega$-limit of $u$ must be a stationary solution of type $v_8$, which are nothing but one of $\mathcal{V}_b, \mathcal{V}_g, \mathcal{V}_s$ in $\omega(u)\subset \mathcal{S}$.
This proves the conclusion in Step 2.

\medskip
{\it Step 3. To show the convergence.}  Since $\omega(u)\subset \mathcal{S}$ and $\mathcal{S}$ has only three isolated element, we conclude that each solution $u$ of \eqref{main-eq} with initial data satisfying   \eqref{ini-cond-2}
must converges to $\mathcal{V}_b, \mathcal{V}_g$ or $\mathcal{V}_s$.

This proves Theorem \ref{thm:conv}.
\end{proof}

\begin{remark}\label{rem:weaken-ini-cond}
\rm
From the proof of the above theorem we see that the key point to use the zero number argument is to guarantee the number of the intersection points between the initial data $u_0$ and any one of $v_i$, denoted it by $\mathcal{Z}[u_0 -v_i]$, is finite. We impose decay rates for $u_0$ in \eqref{ini-cond-2} to ensure this is true.
We remark that this is the only place to use the decay rate assumption.
Clearly, this condition can be extended. For example, if the initial data $u_0$ satisfies
$$
h:= \lim\limits_{x\to -\infty} u_0(x) \in (\alpha, 1),
$$
and satisfies similar conditions on the right hand side, then the zero number argument is applicable as before. In fact, in case $h\in [\theta,1)$, the finiteness of $\mathcal{Z}[u_0 -v_i]$ is obvious; in case $h\in (\alpha,\theta)$, then one can show that $u(x,t)>\theta$ for all large $t$ and $x\ll -1$ by a similar argument as in \cite{FP,FM}, and so it has at most finite number of intersection points with any stationary solution.
\end{remark}

%\newpage
\section{Asymptotic Behavior of the Solutions}
In this section we consider the asymptotic behavior for the solutions based on the general convergence result in Theorem \ref{thm:conv}, and prove our main theorems.

\subsection{Convergence result for small $L$}
\

\noindent
{\it Proof of Theorem \ref{thm1.1}}. When $L<L^*$, we have only one stationary solution $\mathcal{V}_b$ in $\mathcal{S}$. Hence, by the general result in Theorem \ref{thm:conv} we see that spreading happens for all $u$.
\hfill $\Box$

\subsection{Dichotomy result for critical $L^*$}

First, we give a sufficient condition for spreading.

\begin{lem}\label{lem3.5}
Assume \eqref{condition1}, $u_0$ satisfies   \eqref{ini-cond-2}. Assume further that, for some $a\in (\theta,1)$,
\begin{equation}\label{suff-cond-spreading}
u_0(x) \geq V_a (x-x_1),\quad  x\in [x_1 -x_0, x_1+x_0],
\end{equation}
where $V_a $ is the compactly supported stationary solution in \eqref{ss-bistable} with support $[-x_0,x_0]$, $x_1> x_0 +L$.
Then spreading happens for $u$.
\end{lem}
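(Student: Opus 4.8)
The plan is to use a comparison/sliding argument to show that the solution eventually dominates a function which lies above the big stationary solution $\mathcal V_b$ near $+\infty$, so that by the general convergence result (Theorem \ref{thm:conv}) the only admissible $\omega$-limit is $\mathcal V_b$. First I would note that by the comparison principle the hypothesis \eqref{suff-cond-spreading} together with the fact that $V_a(x-x_1)$ is a (compactly supported) stationary subsolution of \eqref{main-eq} on the right of the unfavorable region (since $x_1-x_0 > L$, so $f(x,\cdot)=f_r$ there) gives $u(x,t)\ge V_a(x-x_1)$ for all $t\ge 0$, $x\in[x_1-x_0,x_1+x_0]$; more importantly, standard arguments for compactly supported bistable subsolutions with $a>\theta$ (i.e.\ the ``energy'' of the hump exceeds the threshold, cf.\ \cite{AW1,FM,DM}) show that $u$ ``ignites'' and $\liminf_{t\to\infty} u(x,t) > \alpha$ uniformly on any compact set to the right of $x_1$. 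Combined with the assumption \eqref{ini-cond-2} that $u_0(-\infty)=1$ and $u_0$ is decreasing, we also control the left side: $u(x,t)\to 1$ locally uniformly as $x\to-\infty$, uniformly in $t$, by comparison with solutions of the bistable equation on a left half-line.

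The heart of the argument is then to exploit Theorem \ref{thm:conv}: $\omega(u)\subset\mathcal S=\{\mathcal V_s,\mathcal V_g,\mathcal V_b\}$ when $L=L^*$ (so $\mathcal V_g$ does not exist and $\mathcal S=\{\mathcal V_s,\mathcal V_b\}$). So it suffices to rule out $\mathcal V_s$ as an $\omega$-limit. Here I would use the monotonicity of $u$ in $x$ (which is preserved for decreasing initial data, by differentiating the equation and using the zero-number / maximum principle, since $f$ is independent of $x$ outside $[-L,L]$ and the structure of $f$ there still permits propagation of monotonicity — this needs a short check), so the limit is a monotone stationary solution; but $\mathcal V_b$ and $\mathcal V_s$ are the only monotone-type candidates, and to separate them I would compare $u$ with a translate of the compactly supported $V_{a'}$ for $a'$ slightly below $a$ whose hump has been slid far to the right: by the ignition property above, after a large time $T$ the profile $u(\cdot,T)$ exceeds $V_{a'}(\cdot - y)$ on its support for arbitrarily large $y$; letting $y\to+\infty$ and invoking comparison shows $\liminf_{t\to\infty}u(x,t)\ge$ something strictly above $\mathcal V_s(x)$ for all large $x$, since $\mathcal V_s(x)\to 0$ while the ignited profile stays above $\alpha$ on a moving range. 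This forces the limit to be $\mathcal V_b$.

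I expect the main obstacle to be making the ``ignition spreads arbitrarily far to the right'' step rigorous in the presence of the unfavorable region sitting behind the hump: the standard bistable spreading argument gives propagation of the hump $V_a(x-x_1)$ to the right (this is fine, as everything to the right of $L$ is governed by $f_r$), but one must then argue that this does not get ``pulled back'' and that the left boundary condition $u\to 1$ as $x\to-\infty$ is genuinely communicated across $[-L,L^*]$. The point is precisely that $L=L^*$ is the threshold: I would use the big stationary solution $\mathcal V_b$ itself (or the function $\overline V$ constructed after Remark \ref{rem:general-bi-ss1}, suitably adapted) as an ordered barrier from below once $u$ is known to exceed $\alpha$ on a long enough interval straddling the region, and conclude $u\ge\mathcal V_b-\ve$ eventually, hence $\omega(u)=\{\mathcal V_b\}$. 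Once the $\omega$-limit is pinned to the single element $\mathcal V_b$, convergence in $L^\infty_{loc}(\R)$ follows from Theorem \ref{thm:conv}. A remark: an alternative, cleaner route is to skip the monotonicity discussion and directly construct a time-dependent subsolution of \eqref{main-eq} that starts below $u_0$ (using \eqref{suff-cond-spreading}) and whose profile increases to $\mathcal V_b$; but the zero-number structure already in place via Theorem \ref{thm:conv} makes the $\omega$-limit-elimination argument the most economical.
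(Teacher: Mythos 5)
Your opening move is the right one and coincides with the paper's: since $x_1-x_0>L$, the hump $V_a(x-x_1)$ sits entirely in the region where $f(x,\cdot)=f_r$, so it is a genuine stationary solution there, and the comparison principle gives $u(x,t)\ge V_a(x-x_1)$ on $[x_1-x_0,x_1+x_0]$ for all $t>0$. But after that you abandon the short road for a long one, and the long one contains a false step. The decisive observation, which you never make, is that every element of $\mathcal{S}$ other than $\mathcal{V}_b$ is bounded above by $\theta$ on $[L,\infty)$: $\mathcal{V}_s$ is decreasing with $\mathcal{V}_s(L)\le\theta$, and $\mathcal{V}_g$ attains its unique local maximum to the right of $L$ at $x_2$ with $\mathcal{V}_g(x_2)=\theta$ (see \eqref{V-g-property1}). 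Since the comparison persists for all time, any $\omega$-limit $w$ of $u$ satisfies $w(x_1)\ge V_a(0)=a>\theta$ with $x_1>L$; combined with Theorem \ref{thm:conv}, which forces $w\in\mathcal{S}$, this excludes $\mathcal{V}_s$ and $\mathcal{V}_g$ in one line and leaves $w=\mathcal{V}_b$. No ignition, sliding, or barrier construction is needed, and nothing has to be ``communicated across'' the unfavorable region: the comparison is purely local on the support of the hump.

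The false step is your claim that monotonicity of $u(\cdot,t)$ in $x$ is preserved. It is not: the reaction jumps at $x=\pm L$, so $u_x$ does not satisfy a clean linear equation across these points, and the paper notes explicitly (Step 2 of the proof of Theorem \ref{thm1.3}) that solutions need not be monotone on $[L,\infty)$. More fundamentally, $\mathcal{V}_b$ is even and increasing on $(0,\infty)$, so a spreading solution cannot remain decreasing in $x$; your assertion that ``the limit is a monotone stationary solution'' would wrongly exclude the very limit you are trying to reach. A secondary issue: you argue throughout as if $L=L^*$, but the lemma is stated for general $L$ and is invoked in the trichotomy proof with $L>L^*$, where $\mathcal{V}_g$ exists and must also be ruled out; your sliding argument only addresses separating the limit from $\mathcal{V}_s$.
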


\begin{proof}
Sine $V_a (x-x_1)$ is a stationary solution, by comparison we have
$$
u(x,t) > V_a (x-x_1),\quad  x\in [x_1 -x_0, x_1+x_0],\ t>0.
$$
Hence the $\omega$-limit of $u$ is also larger than $V_a (x-x_1)$. By the previous theorem, suitable choice for such $\omega$-limit must be $\mathcal{V}_b$.
\end{proof}

\noindent
{\it Proof of Theorem \ref{thm1.2}}. Since $\mathcal{S}=\{\mathcal{V}_b, \mathcal{V}_s\}$ in the current case, we have either spreading or residue happens.

The prove the theorem we first show that spreading happens for large $\sigma$.
In fact, by $\phi(-\infty)=1$ in \eqref{ini-cond-2} we see that, when $\sigma\gg 1$ we have
$$
\phi(x-\sigma) \geq V_a (x- x_0 -L),\quad x\in [L, L+2x_0],
$$
for some stationary solution $V_a $ constructed in \eqref{ss-bistable}.
Therefore spreading happens for $u_\sigma$ by Lemma \ref{lem3.5}.

Denote
$$
\Sigma_1 := \{\sigma \in \R \mid \mbox{spreading happens for } u_\sigma(x,t)\}.
$$
Then $\Sigma_1$ is nonempty. By the assumption that $\phi(x)$ is decreasing function, so $\phi(x-\sigma)$ and $u_\sigma$ are increasing in $\sigma$. Hence $\Sigma_1$ is an interval. Moreover, for any $\sigma_1\in \Sigma_1$, $u_{\sigma_1}$ converges as $t\to \infty$ in the $L^\infty_{loc}(\R)$ topology to $\mathcal{V}_b$. Hence, there exists $T_1$ large such that
$$
u_{\sigma_1} (x,T_1) > V_a(x- x_0 -L),\quad x\in [L, L+2x_0].
$$
By the continuously dependence for the solution $u_\sigma$ on its initial data $\phi(x-\sigma)$, we see that, for any $\sigma$ satisfying $0<\sigma_1 -\sigma \ll 1$, we have
$$
u_{\sigma} (x,T_1) \geq V_a(x- x_0 -L),\quad x\in [L, L+2x_0].
$$
Therefore, spreading also happens for $u_{\sigma}$. This proves that $\Sigma_1$ is an open interval $(\sigma^*,\infty)$ for some $\sigma^*\in [-\infty, \infty)$.

If $\sigma^* = -\infty$, then there is nothing left to prove. If $\sigma^*\in \R$, then, for any $\sigma>\sigma^*$, spreading happens for $u_{\sigma}$; for any $\sigma\in (-\infty,\sigma^*]$, the only $\omega$-limit of $u_\sigma$ is $\mathcal{V}_s$, that is, residue happens.

This proves Theorem \ref{thm1.2}. \hfill $\Box$

\subsection{Trichotomy result for large $L$: Proof of Theorem \ref{thm1.3}}\label{subsec:1.3}.

By Proposition \ref{prop:ss} and Theorem \ref{thm:conv}, in case $L>L^*$, any solution of \eqref{main-eq} with initial data satisfying   \eqref{ini-cond-2} must converges to one element in $\mathcal{S}$.

Denote
\begin{equation*}
\begin{aligned}
\Sigma_1 := &\{\sigma\in \R\; |\; \mbox{spreading happens for }u_\sigma\};\\
\Sigma_2:= &\{\sigma \in \R\; |\; u_\sigma \mbox{ is a transition solution}\};\\
\Sigma_3:= &\{\sigma \in \R\; |\; \mbox{residue happens for } u_\sigma\}.
\end{aligned}
\end{equation*}
As in the proof of the previous theorem, one can show that $\Sigma_1$ is a nonempty open interval $(\sigma^*, \infty)$ for some $\sigma^*\in [-\infty, \infty)$.  In what follows we show $\Sigma_3$ is not empty and so $\sigma^* \in \R$. The proof is divided into several steps.

\medskip
{\it Step 1. To show $\Sigma_3$ is not empty}. We will use the the weak upper solution $\overline{V}$ of \eqref{main-eq} constructed at the end of Section 2. Recall that, for some $\varepsilon_0>0$,
\begin{equation}\label{left=1+epsilon}
\overline{V}\equiv 1+\varepsilon_0, \quad x\leq -\overline{L},
\end{equation}
and, for some $\varepsilon \in (0,\varepsilon_0]$,
\begin{equation}\label{bar-V>V-s}
\overline{V}(x) - \mathcal{V}_s(x) \geq 3 \varepsilon,\quad x\in \R.
\end{equation}

By our assumption \eqref{ini-cond-2} we see that, when $\sigma\ll -1$, the function $\phi(x-\sigma)< \overline{V}(x)$. Since $\overline{V}$ is a weak upper solution, by comparison, any $\omega$-limit of $u_\sigma(x,t)= u(x,t; \phi(x-\sigma))$ is less than $\overline{V}$, which is nothing but $\mathcal{V}_s$. This proves that $\Sigma_3$  is not empty.

\medskip
{\it Step 2. To show $\Sigma_3$ is an open set}.
Assume $\sigma_1\in \Sigma_3$, we now show that $\sigma_2 \in \Sigma_3$ when  $0<\sigma_2 -\sigma_1 \ll 1$. This implies that $\Sigma_3$ in open. For simplicity, denote the corresponding solutions of \eqref{main-eq} with initial data $\phi(x-\sigma_i)$  by $u_i$ ($i=1,2$). By the definition, $u_1(\cdot,t)$ converges as $t\to \infty$ to $\mathcal{V}_s$ in the $L^\infty_{loc}(\R)$ topology. Hence, there exists $T>0$ such that
$$
\| u_1 (x,t)- \mathcal{V}_s(x)\|_{L^\infty ([-\overline{L}, L])} \leq \varepsilon,\quad t\geq T.
$$
By parabolic estimate, for any $\nu\in (0,1)$, we actually have
$$
\| u_1 (x,t)- \mathcal{V}_s(x)\|_{C^{1+\nu} ([-\overline{L}, L])} \leq \varepsilon_1 ,\quad t\geq T,
$$
for some small $\varepsilon_1>0$. Taking $\varepsilon$ smaller if necessary we see that $\varepsilon_1$ is also small and so
\begin{equation}\label{u1x-L<0}
u_{1x}(L,t) < \mathcal{V}'_s (L) + \varepsilon_1 <0,\quad t\geq T.
\end{equation}
Moreover, by \eqref{left=1+epsilon} we have
\begin{equation}\label{small-for-x<L}
\| u_1 (x,t)- \mathcal{V}_s(x)\|_{L^\infty ((-\infty, L])} \leq \varepsilon,\quad t\geq T.
\end{equation}
Note that this inequality is not necessarily true in $J:= [L, \infty)$ since, till now, we have no monotonicity in this interval. Actually, a solution $u$ can really be not monotonically decreasing in this interval, especially for the spreading and transition solutions.
Nevertheless, we can claim that $u_1$ is monotonically decreasing for $x\gg 1$ and all large $t$. This is proved in two claims.

\medskip
\noindent
{\bf Claim 1}. There exists $M_1 =M_1 (T)>L+1$ such that $u_{1x}(x,T)<0$ for $x\geq M_1$.

\noindent
In fact, due to the monotonicity of $\phi(x-\sigma_1)$, by using the zero number argument in $J$ we see that any local maximum points of $u_1$ in $J$ must first arise at $x=L$ and then propagates rightward to the interior of $J$. Denote the right-most maximum point by $\xi(t)$, that is,
$$
u_{1x}(\xi(t),t)=0, \quad u_{1x}(x,t)<0 \mbox{ for } x>\xi(t),\ t>T_0,
$$
where $T_0$ is the first time when $\xi(t)$ appears in $J$. Since $u_{1x}$ satisfies a linear parabolic equation, by a simple estimate for linear equation we can derive that
$$
\xi(t)\leq M_1 (T),\quad t\in [T_0,T].
$$
This proves Claim 1.

\medskip
\noindent
{\bf Claim 2}. There exists $M_2>M_1$ such that $u_{1x}(x,t)<0$ for all $x\geq M_2, t\geq T$.

In fact, by Claim 1, there exists $M_2>M_1$, such that
\begin{equation}\label{Vs<epsilon-M2}
\mathcal{V}_s(x)<\varepsilon,\quad x\geq M_2,
\end{equation}
and
$$
u_1(x,T) > \tilde{u}_1(x,T), \quad x\in [L,M_2),
$$
where $\tilde{u}_1(x,t) := u_1(2M_2 -x,t)$ for $x\in [L,M_2],\ t\geq T$ denotes the reflection of $u_1$ with respect to $M_2$, and so $\eta(x,t):= u_1(x,t) - \tilde{u}_1(x,t)$ satisfies
$$
 \left\{
 \begin{array}{ll}
 \eta_t = \eta_{xx} + c(x,t) \eta,& x\in [L,M_2],\ t\geq T,\\
 \eta_x (L,t) <0, \quad \eta(M_2,t) =0, & t\geq T,\\
 \eta(x,T)>0, & x\in [L,M_2),
 \end{array}
 \right.
$$
for some bounded function $c$. By the maximum principle we have $\eta(x,t)>0$ for all
$x\in [L,M_2)$ and $t\geq T$.  If the right-most maximum $\xi(t)$ of $u_1$ moves to $M_2$ at some time $T_1>T$, then we have
$$
\eta(M_2, T_1)=0, \quad  \eta_x(M_2,T_1) = 2 u_{1x} (M_2, T_1) = 2u_{1x}(\xi(T_1), T_1)=0,
$$
this, however, contradicts the Hopf lemma. Thus $\xi(t)$ will never move rightward through
$M_2$.

\medskip
Now using the $L^\infty_{loc}(\R)$ convergence of $u_1$ to $\mathcal{V}_s$ we conclude that, for some large $T_2 >T_1 $,
$$
\| u_1 (x,t)- \mathcal{V}_s(x)\|_{L^\infty ((-\infty, M_2])} \leq \varepsilon,\quad t\geq T_2.
$$
Consequently, by Claim 2 and \eqref{Vs<epsilon-M2} we have
$$
u_1(x,t) \leq u_1(M_2, t) \leq \mathcal{V}_s (M_2)+\varepsilon \leq 2 \varepsilon ,\quad x\geq M_2,\ t\geq T_2.
$$
Together with \eqref{bar-V>V-s} we conclude that
$$
u_1(x,T_2) \leq \overline{V}(x) -\varepsilon,\quad x\in \R.
$$

By the continuous dependence of the solution $u_\sigma$ on its initial data $\phi(x-\sigma)$, we see that when $\sigma_2>\sigma_1$ with $\sigma_2 -\sigma_1 $ sufficiently small we have
$$
u_2(x,T_2) \leq \overline{V}(x),\quad x\in \R.
$$
This indicates that residue happens for $u_2$. This proves the openness of $\Sigma_3$ in Step 1. Furthermore, by comparison, there exists $-\infty < \sigma_* \leq \sigma^*$ such that $\Sigma_3 := (-\infty, \sigma_*)$.

\medskip
{\it Step 3. To complete the proof of Theorem \ref{thm1.3}}. From above we have
$$
\Sigma_1 = (\sigma^*,\infty),\quad \Sigma_3 =(-\infty, \sigma_*).
$$
Therefore, $\Sigma_2 =[\sigma_*, \sigma^*]$ is a nonempty closed set.

This proves Theorem \ref{thm1.3}.  \hfill $\Box$

\end{document}